\newcommand{\qed}{$\;\;\;\Box$}
\newenvironment{proof}{\par\smallbreak{\sl Proof.~}}
{\unskip\nobreak\hfill \qed \par\medbreak}
\newcounter{claim}
\renewcommand{\theclaim}{\arabic{claim}}
{\par\medskip\par}
\newcommand{\hide}[1]{}
\newcommand{\D}{{\cal D}}
\newcommand{\R}{{\mathbb R}}
\newcommand{\LL}{{\cal L}}
\newcommand{\beq}{\begin{equation}}
\newcommand{\ee}{\end{equation}}
\renewcommand{\d}{\partial}
\newtheorem{thm}{Theorem}[section]
\newtheorem{lemma}[thm]{Lemma}
\newtheorem{defn}[thm]{Definition}
\newtheorem{cor}[thm]{Corollary}
\newtheorem{rem}[thm]{Remark}
\newtheorem{ex}[thm]{Example}
\newcommand{\al}{\alpha}
\newcommand{\eps}{\varepsilon}
\newcommand{\om}{\omega}
\newcommand{\reff}[1]{(\ref{#1})}
\date{}
\title{
Exponential Dichotomy for Hyperbolic Systems with Periodic Boundary Conditions
}
\newcounter{thesame}
\author{
 R. Klyuchnyk \ \ \ I. Kmit
\ \ \ L. Recke\\ [5mm]
{\small
Institute for Applied Problems of Mechanics and Mathematics,
}
\\
{\small
Ukrainian Academy of Sciences}
\\
{\small   E-mail:
{\tt roman.klyuchnyk@gmail.com}}
\\ [5mm]
{\small
Institute of Mathematics, Humboldt University of Berlin, Germany}
\\
{\small and Institute for Applied Problems of Mechanics and Mathematics,
}
\\
{\small
Ukrainian Academy of Sciences}\\
{\small   E-mail:
{\tt kmit@mathematik.hu-berlin.de}}\\ [5mm]
{\small
Institute of Mathematics, Humboldt University of Berlin, Germany}
\\
{\small   E-mail:
{\tt recke@mathematik.hu-berlin.de}}
}
\begin{document}
\maketitle

\begin{abstract}
\noindent 
We investigate evolution families generated by general linear first-order hyperbolic 
systems  in one space dimension with periodic boundary conditions. 
We state explicit conditions 
on the coefficient functions that are sufficient for the existence of exponential
dichotomies on $\R$ in the space of continuous periodic functions. 
\end{abstract}


\section{Introduction}\label{sec:intr} 

\renewcommand{\theequation}{{\thesection}.\arabic{equation}}
\setcounter{equation}{0}

\subsection{Problem setting and the main results}\label{sec:setting}

We consider linear homogeneous first-order hyperbolic systems in one space variable
  \begin{equation}
 \partial_{t}u_j
 +a_j(x,t)\partial_{x}u_j
 +\sum_{k=1}^{n}b_{jk}(x,t)u_k
 =0,
 \;\;\; (x,t)\in\R^2,\;\;\; j\le n,
 \label{f1l}
 \end{equation}
 with periodic boundary conditions
 \beq\label{f2l}
  u_j(x+1,t)=u_j(x,t),\;\;\;(x,t)\in\R^2,\;\;\; j\le n,
 \ee
and initial conditions 
 \beq\label{f3l}
  u_j(x,s)=u_j^{s}(x),\;\;\;x\in\R,\;\;\; j\le n.
 \ee
Here $s\in\R$ is an arbitrary fixed initial time.
Throughout the paper, we suppose that the coefficient functions 
 $a_j, b_{jk}:\R^2\to\R$  are bounded, continuous, and
 $1$-periodic 
with respect to the space variable $x$. Furthermore, the initial data $u^{s}_j:\R\to\R$ are supposed to be continuous and $1$-periodic.
Finally, we suppose that the leading order coefficients $a_j$ have bounded and continuous
partial derivatives in $x$ and $t$, and that the following condition is fulfilled:
\beq
\label{f5l}
\inf\{|a_j(x,t)|:\, (x,t) \in \R^2, j\le n\}>0.
\ee

Our goal is to state conditions on the coefficients $a_j$ and $b_{jk}$ such that
the evolution family
generated by the initial-boundary value problem \reff{f1l}--\reff{f3l} has an 
exponential dichotomy on $\R$. 
In particular, we will prove that the following conditions are sufficient for the existence
of an exponential dichotomy:
\begin{itemize}
\item
 $b_{jj}(x,t)\not=0$ for all $j$, $x$ and $t$, while
for all $j\not=k$ the functions $|b_{jk}|$ are uniformly small (in terms of  
 the coefficients $a_j$ and $b_{jj}$, see 
Theorem \ref{thm:th3}).
\item
 $a_j(x,t)b_{jj}(x,t)<0$ for all $j$, $x$ and $t$, while for all  $j\not=k$, $x$, $t$ it holds $a_j(x,t) \not=a_k(x,t)$ and $b_{jk}(x,t)\to 0$ as $t \to \pm\infty$ uniformly in $x$ (cf.\ Theorem \ref{thm:th2}).
\end{itemize}

To formulate our results more precisely, let us introduce the evolution family
generated by \reff{f1l}--\reff{f3l}, whose existence is stated in Theorem \ref{thm:th1}
below. Recall that,
for $j\le n$ and $(x,t)\in\R^2$, the $j$-th characteristic of the system \reff{f1l} 
through the point $(x,t)$ is defined as the solution 
$\tau_j(\xi,x,t)$ of the initial value problem
$$
\d_{\tau}\xi_{j}(\tau,x,t)=a_j(\xi_j(\tau,x,t),\tau), \;\;\; \xi_{j}(t,x,t)=x.
$$
It is easy to show by  integration along characteristics
that, if $u=(u_1,...,u_n)$ is a classical solution to \reff{f1l}--\reff{f3l}, then 
\begin{eqnarray}\label{f8l}
&&u_j(x,t)=\displaystyle \exp\left(-\int_{s}^{t}b_{jj}(\xi_j(r,x,t),r)d r\right)
u_{j}^{s}(\xi_{j}(s,x,t))\nonumber\\
&&-\displaystyle\int_{s}^{t}\exp\left(-\int_{\tau}^{t}b_{jj}(\xi_j(r,x,t),r)dr\right)
\sum_{k\neq j}b_{jk}(\xi_j(\tau,x,t),\tau)u_{k}(\xi_{j}(\tau,x,t),\tau)d\tau
\end{eqnarray}
for all $(x,t)\in\R^2$ and all $j\le n$.
Vice versa, if the initial functions $u_j^s$ are $C^1$-smooth, then any solution $u$ to \reff{f8l}, 
which is $1$-periodic in $x$, is a classical solution to \reff{f1l}--\reff{f3l}.

We will work in the Banach  space
$$
C_{per}(\R;\R^n):=\{u\in C(\R;\R^n): u(x+1)=u(x) \mbox{ for all } x\in\R\} 
$$
normed by
\begin{equation}
\label{norm}
\|u\|:=\sup\{|u_j(x)|: x\in\R,\; j\le n\}.
\end{equation}
As usual, the space of all linear bounded operators $A: C_{per}(\R;\R^n)\to C_{per}(\R;\R^n)$ will 
be denoted by $\mathcal{L}(C_{per}(\R;\R^{n}))$, and the operator norm will be defined by
$$\|A\|:=\sup\{ \|Au\|: u\in C_{per}(\R;\R^n), \|u\|\le 1\}.
$$

The following result states that the initial-boundary value problem \reff{f1l}--\reff{f3l}
is well-posed.

\begin{thm}\label{thm:th1}
Suppose \reff{f5l} holds. Then, given $s\in\R$, for every $u^{s}\in C_{per}(\R;\R^n)$ there exists exactly one continuous 
function $u:\R^2\to\R^n$ satisfying 
 \reff{f2l} and \reff{f8l}. Moreover, the map
$$
u^{s}\mapsto U(t,s)u^{s}:=u(\cdot,t)
$$
from $C_{per}(\R;\R^n)$ to itself
defines  a strongly continuous, exponentially bounded evolution family of invertible operators $U(t,s)\in \LL(C_{per}(\R;\R^n))$, which means that
\begin{itemize}
\item
$U(t,t)=I$ and $U(t,s)=U(t,r)U(r,s)$  for all $t, r, s \in \R,$
\item
 the map $(t,s)\in \R^2\mapsto U(t,s)u\in C_{per}(\R;\R^n)$ is continuous for each
$u\in C_{per}(\R;\R^n),$
\item
  there exist $K \ge 1$ and $\omega \in \R$ such that
\beq\label{k3}
\|U(t,s)\|\le Ke^{\omega(t-s)} \mbox{ for all } t\ge s.
\ee
\end{itemize}
\end{thm}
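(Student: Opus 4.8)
The plan is to construct the solution of the integral system \reff{f8l} by a fixed-point argument for each fixed $s$, and then to verify the evolution-family properties directly from the construction and from uniqueness. First I would record the basic properties of the characteristics: condition \reff{f5l}, together with boundedness and continuity of $a_j$ and its derivatives, guarantees that each $\xi_j(\cdot,x,t)$ is globally defined, of class $C^1$, depends continuously (indeed $C^1$) on $(x,t)$, and is strictly monotone in its first argument; moreover $1$-periodicity of $a_j$ in $x$ implies $\xi_j(\tau,x+1,t)=\xi_j(\tau,x,t)+1$, which is the key fact that will preserve $1$-periodicity. I would fix $T>s$ and work on the Banach space $X_T:=C([s,T];C_{per}(\R;\R^n))$ with the sup norm, and define the map $\Phi$ on $X_T$ by the right-hand side of \reff{f8l}, i.e.\ $(\Phi u)_j(x,t)$ equals the stated exponential times $u_j^s(\xi_j(s,x,t))$ minus the integral term involving $\sum_{k\neq j}b_{jk}$.

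The next step is to check that $\Phi$ maps $X_T$ into itself and is a contraction for $T-s$ small. Continuity of $(\Phi u)(\cdot,t)$ in $x$ and joint continuity in $(x,t)$ follow from continuity of the coefficients and of the characteristics and standard dominated-convergence arguments; $1$-periodicity of $(\Phi u)(\cdot,t)$ in $x$ follows from the shift identity $\xi_j(\tau,x+1,t)=\xi_j(\tau,x,t)+1$ together with $1$-periodicity of $b_{jk}$, $b_{jj}$ and $u_k$ in $x$. For the contraction estimate, the exponential prefactors are bounded by $e^{B(T-s)}$ with $B:=\sup_{j,x,t}|b_{jj}(x,t)|$, and the difference $\Phi u-\Phi v$ involves only the integral term, so $\|\Phi u-\Phi v\|_{X_T}\le (T-s)e^{B(T-s)}\,C_b\,\|u-v\|_{X_T}$ where $C_b:=\sup_{j,x,t}\sum_{k\neq j}|b_{jk}(x,t)|$; hence $\Phi$ is a contraction once $T-s$ is small enough, giving a unique fixed point on $[s,T]$. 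Since the contraction constant depends only on the global bounds $B$ and $C_b$ and not on $s$ or on the size of $u^s$, this local solution extends to all of $[s,\infty)$ by iterating on successive intervals of fixed length; running the characteristics backward in time gives existence and uniqueness on $(-\infty,s]$ as well, so we obtain a unique continuous $u:\R^2\to\R^n$ satisfying \reff{f2l} and \reff{f8l}, and we set $U(t,s)u^s:=u(\cdot,t)$.

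It remains to verify the three itemized properties. Linearity of $U(t,s)$ is clear since \reff{f8l} is linear in $(u^s,u)$ and the solution is unique; boundedness and the exponential bound \reff{k3} follow from a Gronwall estimate applied to $t\mapsto \sup_{x,j}|u_j(x,t)|$ using \reff{f8l}, which yields $\|u(\cdot,t)\|\le e^{B(t-s)}\|u^s\|+\int_s^t e^{B(t-\tau)}C_b\|u(\cdot,\tau)\|\,d\tau$ and hence $\|U(t,s)\|\le e^{(B+C_b)(t-s)}$ for $t\ge s$, i.e.\ $K=1$, $\omega=B+C_b$. The cocycle identity $U(t,r)U(r,s)=U(t,s)$ and $U(t,t)=I$ follow from uniqueness: both sides solve \reff{f8l} with the same initial datum at time $s$ (using the flow property $\xi_j(\sigma,x,t)=\xi_j(\sigma,\xi_j(r,x,t),r)$ of the characteristics to split the integral in \reff{f8l} at $\tau=r$). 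Strong continuity of $(t,s)\mapsto U(t,s)u$ is read off from the joint continuity of $u(\cdot,t)$ already built into $X_T$, combined with the exponential bound to control the dependence on $s$. Invertibility of $U(t,s)$ for all $t,s$ then follows from $U(t,s)U(s,t)=U(t,t)=I$, once existence backward in time has been established as above. The main obstacle is the first step — setting up the characteristic flow carefully and proving that the fixed-point map genuinely preserves the space $C_{per}$ (continuity, periodicity, and the uniform-in-$s$ contraction constant); once that is in place, the evolution-family axioms are routine consequences of uniqueness and Gronwall's inequality.
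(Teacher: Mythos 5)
Your proposal is correct, but it takes a genuinely different route from the paper. The paper's proof of Theorem \ref{thm:th1} is essentially a citation: existence, uniqueness and continuity of the solution to \reff{f8l} are taken from Kmit's earlier result \cite[Theorem 2.1]{Km}, and the exponential bound \reff{k3} is obtained by quoting the a priori estimate \reff{es1} derived in the proof of that cited theorem, yielding $K=1$ and $\omega=\theta^{-1}\log(3+2n)$ with $\theta=\min\{(2\sup|a_j|)^{-1},(2n(n+1)\sup|b_{jk}|)^{-1}\}$. You instead give a self-contained construction: the characteristic-flow shift identity $\xi_j(\tau,x+1,t)=\xi_j(\tau,x,t)+1$ to ensure the fixed-point map preserves $C_{per}$, a Banach fixed-point argument on $C([s,T];C_{per}(\R;\R^n))$ with a contraction constant depending only on global coefficient bounds (hence extendable stepwise to all times, forward and backward), a Gronwall estimate giving the cleaner bound $\|U(t,s)\|\le e^{(B+C_b)(t-s)}$ with $B=\sup|b_{jj}|$ and $C_b=\sup_j\sum_{k\ne j}|b_{jk}|$, and the cocycle/invertibility/strong-continuity properties deduced from uniqueness together with the flow identity $\xi_j(\sigma,x,t)=\xi_j(\sigma,\xi_j(r,x,t),r)$. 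What the paper's approach buys is brevity by deferring the real work to a reference; what your approach buys is a self-contained, elementary argument and a sharper and more transparent exponential constant. Both are valid; the only mild extra care your route requires (and which you correctly flag) is verifying that the local fixed-point solution on successive subintervals glues into a single solution of \reff{f8l} with initial time $s$, which again rests on the flow identity for the characteristics.
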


In order to formulate our main results,
let us introduce the following notation:
\begin{eqnarray*}
\al_j^-&:=&\inf\left\{a_j(x,t):\, (x,t) \in \R^2\right\},\\
\al_j^+&:=&\sup\left\{a_j(x,t):\, (x,t) \in \R^2\right\},\\
\beta_j^-&:=&\inf\left\{b_{jj}(x,t):\, (x,t) \in \R^2\right\},\\
\beta_j^+&:=&\sup\left\{b_{jj}(x,t):\, (x,t) \in \R^2\right\},\\
\label{betadef}
\beta_j&:=&\sup\left\{\sum_{k\not=j}|b_{jk}(x,t)|:\, (x,t) \in \R^2\right\}.
\end{eqnarray*}
By the assumption \reff{f5l},
either $\al_j^->0$ or $\al_j^+<0$.

We are now prepared to formulate our first sufficient condition
for the existence of an exponential dichotomy for \reff{f1l}--\reff{f3l} on $\R$.

\begin{thm}\label{thm:th3}
Suppose \reff{f5l} holds. Moreover, suppose that 
\beq
\label{betacond}
\inf\{|b_{jj}(x,t)|:\, (x,t) \in \R^2, j\le n\}>0
\ee 
and  the following inequalities are true for all $j\le n$:
\begin{eqnarray}
\label{a--}
\beta_j< \beta_j^- \frac{\al_j^-}{\al_j^+}&\mbox{ if } & \al_j^->0, \;\beta_j^->0,\\
\label{a++}
\beta_j < -\beta_j^+ \frac{\al_j^+\left(1-e^{-\beta_j^+/\al_j^-}\right)}{\al_j^-\left(1-e^{-\beta_j^+/\al_j^+}\right)}   &\mbox{ if } & \al_j^+<0, \;\beta_j^+<0,\\
\label{a-+}
\beta_j < \beta_j^- \frac{1-e^{\beta_j^+/\al_j^+}}{1-e^{-\beta_j^-/\al_j^-}}\left(e^{\beta_j^+/\al_j^+-\beta_j^-/\al_j^-}-e^{\beta_j^+/\al_j^+}+1\right)^{-1}
&\mbox{ if } & \al_j^->0, \;\beta_j^+<0,\\
\label{a+-}
\beta_j <  -\beta_j^+ \frac{1-e^{\beta_j^-/\al_j^-}}{1-e^{-\beta_j^+/\al_j^+}}\left(e^{\beta_j^-/\al_j^--\beta_j^+/\al_j^+}-e^{\beta_j^-/\al_j^-}+1\right)^{-1}
&\mbox{ if } & \al_j^+<0, \;\beta_j^->0.
\end{eqnarray}
Then the evolution family $U(t,s)$ has an exponential dichotomy on $\R$, 
which means that there exist a projection $P=P^2 \in \mathcal{L}(C_{per}(\R;\R^n))$ and positive constants $M$ and $\omega$ such that 
$$
\|U(t,0)PU(0,s)\|+
\|U(s,0)(I-P)U(0,t)\|\le Me^{-\omega(t-s)} \mbox{ for all } t\ge s.
$$ 
\end{thm}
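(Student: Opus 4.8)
The plan is to decouple the system and reduce the dichotomy question to each scalar equation
$\partial_t u_j + a_j \partial_x u_j + b_{jj} u_j = 0$ separately, treating the off-diagonal
terms $b_{jk}$ ($k\neq j$) as a perturbation. First I would use the representation
formula \reff{f8l} together with \reff{f5l}: since each $a_j$ has a fixed sign, the
characteristic through $(x,t)$ at time $s$ moves a definite distance, and by $1$-periodicity
in $x$ the scalar evolution operator $U_j(t,s)$ associated with
$\partial_t u_j + a_j \partial_x u_j + b_{jj} u_j = 0$ acts on $C_{per}(\R;\R)$.
The key scalar estimate is to bound $\|U_j(t,s)\|$ and $\|U_j(s,t)^{-1}\|$ by tracking the
exponential factor $\exp(-\int_s^t b_{jj}(\xi_j(r,x,t),r)\,dr)$ along characteristics.
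Here the sign of $a_j b_{jj}$ decides whether a given characteristic, followed backward in
time, stays "above" or "below" its initial point, and the various cases
\reff{a--}--\reff{a+-} correspond exactly to the four sign combinations of
$(\al_j^\pm,\beta_j^\pm)$; the awkward-looking right-hand sides are precisely the quantities
that make the scalar contraction/expansion rates beat the perturbation size $\beta_j$.

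Next I would set up the splitting of $C_{per}(\R;\R^n)$ according to which scalar
components are contracting (forward) and which are expanding. For indices $j$ with a
uniformly positive contraction rate one puts the corresponding coordinate into the stable
part; for those with a uniformly positive expansion rate, into the unstable part; the
condition \reff{betacond} that $b_{jj}$ is bounded away from zero guarantees that in each
of the four regimes one of these two alternatives holds with a uniform rate. This produces
a constant projection $P_0$ (projection onto a set of coordinates) and an exponential
dichotomy for the \emph{diagonal} evolution family $U_0(t,s)=\diag(U_1(t,s),\dots,U_n(t,s))$
with some rate $\omega_0>0$ and constant $M_0$, uniformly in $s\in\R$.

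Then I would bring back the off-diagonal coupling via a roughness/perturbation argument.
Writing $U(t,s)$ as the solution of the integral equation \reff{f8l}, the difference
between $U(t,s)$ and $U_0(t,s)$ is governed by the off-diagonal operators whose norms are
controlled by $\beta_j$. The standard roughness theorem for exponential dichotomies
(Gronwall/fixed-point on the variation-of-constants formula, or the Perron-type argument
on bounded solutions) says: if the perturbation is small relative to the dichotomy
constants $M_0$ and $\omega_0$, then $U(t,s)$ still has an exponential dichotomy, with a
nearby projection $P$ and slightly weakened constants $M,\omega$. The inequalities
\reff{a--}--\reff{a+-} are engineered so that $\beta_j$ falls below exactly the threshold
needed here, so this step goes through.

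\textbf{Main obstacle.}
The hard part is the scalar estimate: getting \emph{sharp} two-sided bounds on
$\exp(-\int_s^t b_{jj}(\xi_j(r,x,t),r)\,dr)$ that are uniform over all characteristics and
all $s$, and converting them into the precise dichotomy rate that appears in the thresholds.
One must carefully analyze, for each of the four sign cases, how far a characteristic travels
before it "wraps around" the period in $x$, because $b_{jj}$ is only controlled by its
infimum and supremum and can conspire against a naive bound; the extremal configurations
(a characteristic spending as much time as possible where $|b_{jj}|$ is small, as little as
possible where it is large) are what produce the exponential expressions on the right-hand
sides of \reff{a--}--\reff{a+-}. Once that scalar bookkeeping is done correctly, the
decoupling into stable/unstable coordinates and the roughness perturbation are routine.
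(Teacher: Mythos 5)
Your route is genuinely different from the paper's. You propose to first establish a dichotomy for the decoupled diagonal evolution $U_0(t,s)=\diag(U_1(t,s),\dots,U_n(t,s))$ (with a coordinate projection $P_0$) and then invoke a roughness/perturbation theorem to pass to the full evolution $U(t,s)$. The paper instead invokes the Latushkin--Randolph--Schnaubelt criterion (Theorem~\ref{thm:thD}): an evolution family has an exponential dichotomy iff the inhomogeneous mild equation \reff{mild} is uniquely solvable in bounded continuous functions. After showing (Lemma~\ref{lem:31}) that \reff{mild} is equivalent to the system of integral equations \reff{f10a} obtained by integrating along characteristics over one spatial period, the dichotomy question becomes: is $I-C-D$ invertible on $BC_{per}(\R^2;\R^n)$? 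The paper then proves $I-C$ is invertible with explicit component-wise bounds (Lemma~\ref{lem:iso}) and shows $\|(I-C)^{-1}D\|<1$ under \reff{a--}--\reff{a+-} (Lemma~\ref{lem:Dest}, Corollary~\ref{cor:1}), so a Neumann series gives the inverse. No roughness theorem is used.

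Where your proposal has a real gap is the final sentence of the roughness paragraph: \emph{``The inequalities \reff{a--}--\reff{a+-} are engineered so that $\beta_j$ falls below exactly the threshold needed here, so this step goes through.''} This is asserted, not verified, and it is in fact doubtful. A roughness theorem for an additive bounded perturbation $B(t)$ of a dichotomous $U_0$ gives a smallness threshold of the form $\sup_t\|B(t)\|<g(M_0,\omega_0)$, where $M_0,\omega_0$ are the dichotomy constant and rate of $U_0$. For each scalar $U_j$ you get $\|U_j(t,s)\|\le e^{-\beta_j^-(t-s)}$ (if $\beta_j^->0$) or $\|U_j(s,t)\|\le e^{\beta_j^+(t-s)}$ (if $\beta_j^+<0$), i.e.\ $M_0=1$ and $\omega_0=\min_j\min(\beta_j^-,|\beta_j^+|)$. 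Two problems follow. First, a generic roughness bound ties all components to the single worst rate $\omega_0$, whereas \reff{a--}--\reff{a+-} are per-component conditions; your argument would therefore prove a strictly weaker statement. Second, and more seriously, a time-based roughness threshold depends only on $\omega_0$ and would \emph{not} involve the quantities $\al_j^\pm$ at all, while \reff{a--}--\reff{a+-} do (via $\al_j^-/\al_j^+$ and $e^{\pm\beta_j/\al_j}$). Those ratios and exponentials arise precisely because the paper measures contraction over one spatial period (the factor $c_j(0,x,t)=\exp(-\int_0^x (b_{jj}/a_j)\,d\eta)$ in the operator $C$), not per unit time. Already in the constant-coefficient case with $a_jb_{jj}<0$, Remark~\ref{r1} gives the $a_j$-dependent threshold $\beta_j<|b_{jj}|/(2e^{-b_{jj}/a_j}-1)$, which a time-based roughness estimate cannot reproduce. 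So as written, your plan would at best yield sufficient conditions of a different (and generally less precise) form, and you would still need to supply the quantitative roughness theorem with explicit constants to make any version of the argument complete.
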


\begin{rem}\rm
Roughly speaking, Theorem \ref{thm:th3} claims the following: 
Given  $a_j$ and $b_{jj}$ satisfying \reff{f5l} and \reff{betacond}, 
the evolution family $U(t,s)$ has an exponential dichotomy on $\R$ if
  $b_{jk}$ with $j \not=k$ are  sufficiently small, in the sense of  the 
inequalities \reff{a--}--\reff{a+-}.
\end{rem}

\begin{rem}\label{r1}\rm
If the coefficients  $a_j$ and $b_{jj}$ are constants, then 
$$
\al_j^+=\al_j^-=a_j,\;
\beta_j^+=\beta_j^-=b_{jj}
$$
and, hence, \reff{betacond}--\reff{a+-} is equivalent to
\begin{eqnarray*}
\beta_j < |b_{jj}| &\mbox{if} & a_jb_{jj}>0,\\
\beta_j <  \frac{|b_{jj}|}{2e^{-b_{jj}/a_j}-1}
&\mbox{if} & a_jb_{jj}<0.
\end{eqnarray*}
In particular, if $n=2$, then these conditions read 
\begin{equation}
\label{n=2}
\begin{array}{rl}
|b_{12}| < |b_{11}| \mbox{ if }  a_1b_{11}>0,&
|b_{21}| < |b_{22}| \mbox{ if }  a_2b_{22}>0,\\
|b_{12}| <  \frac{|b_{11}|}{2e^{-b_{11}/a_1}-1}
\mbox{ if }  a_1b_{11}<0, &\displaystyle |b_{21}| <  \frac{|b_{22}|}{2e^{-b_{22}/a_2}-1}
\mbox{ if }  a_2b_{22}<0.
\end{array}
\end{equation}
\end{rem}
\vskip0.8mm

Now we formulate our second sufficient condition
for the existence of an exponential dichotomy for \reff{f1l}--\reff{f3l} on $\R$ .

\begin{thm}\label{thm:th2}
Suppose that \reff{f5l} is true and that either
\begin{equation}\label{k0}
\sup\left\{b_{jj}(x,t):\, (x,t) \in \R^2, j\le n\right\}<0
\end{equation}
or 
\begin{equation}\label{k00}
\inf\left\{b_{jj}(x,t):\, (x,t) \in \R^2, j\le n\right\}>0.
\end{equation}
Moreover, suppose the following:
\begin{equation}\label{f143l}
\begin{array}{ll}
\mbox{for all } 1\le j\neq k\le n \mbox{ and } \varepsilon>0 \mbox{ there exists } c>0
\mbox{ such that }
\\
|b_{jk}(x,t)|<\varepsilon \mbox{ for all } x\in \R \mbox{ and } t\in \R\setminus[-c,c]
\end{array}
\end{equation}
and
\beq\label{f9l}
\begin{array}{ll}
\mbox{for all } 1\le j\neq k\le n \mbox{ there exists }  \tilde b_{jk}\in C^1(\R^2) \mbox{ such that}\\
b_{jk}(x,t)=\tilde{b}_{jk}(x,t)(a_j(x,t)-a_k(x,t)) \mbox{ for all } (x,t)\in\R^2.
\end{array}
\ee 
Then the evolution family $U(t,s)$ has an exponential dichotomy on $\R$.
\end{thm}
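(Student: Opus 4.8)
The plan is to show that, in case \reff{k0}, the evolution family $U(t,s)$ possesses the \emph{trivial} exponential dichotomy on $\R$ — the one with projection $P=0$, for which the dichotomy estimate collapses to $\|U(s,t)\|\le Me^{-\omega(t-s)}$ for all $t\ge s$ (``every solution grows forward in time''). Case \reff{k00} is handled in exactly the same way and yields the dichotomy with $P=I$; alternatively \reff{k00} reduces to \reff{k0} by reversing time, since $t\mapsto -t$ turns the system into one with leading coefficients $-a_j(x,-t)$ and diagonal coefficients $-b_{jj}(x,-t)<0$, and preserves both \reff{f5l} and \reff{f143l}. So fix \reff{k0} and set $\mu:=-\sup\{b_{jj}(x,t):(x,t)\in\R^2,\ j\le n\}>0$.

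The first step is to treat the \emph{diagonal} evolution family $U_0(t,s)$ generated by \reff{f1l}--\reff{f3l} with all off-diagonal coefficients $b_{jk}$, $j\ne k$, set to zero; this truncated system still satisfies the hypotheses of Theorem \ref{thm:th1}, and by \reff{f8l} it acts componentwise,
$$
[U_0(t,s)u]_j(x)=\exp\!\Big(-\int_s^t b_{jj}(\xi_j(r,x,t),r)\,dr\Big)\,u_j(\xi_j(s,x,t)).
$$
The only point needing attention is the interaction with the periodic boundary condition: since $a_j$ is $1$-periodic in $x$, the characteristic flow $x\mapsto\xi_j(s,x,t)$ is an increasing homeomorphism of $\R$ satisfying $\xi_j(s,x+1,t)=\xi_j(s,x,t)+1$, hence it maps every interval of length $1$ onto an interval of length $1$, and therefore $\sup_x|u_j(\xi_j(s,x,t))|=\sup_x|u_j(x)|$ for $1$-periodic $u_j$. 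Combining this with $-\int_s^t b_{jj}(\xi_j(r,x,t),r)\,dr\ge\mu(t-s)$ for $t\ge s$, and passing to inverses, gives $\|U_0(s,t)\|\le e^{-\mu(t-s)}$ for all $t\ge s$; that is, $U_0$ has an exponential dichotomy on $\R$ with $P=0$.

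The second step passes from $U_0$ to $U$ by the variation-of-constants identity contained in \reff{f8l},
$$
U(t,s)=U_0(t,s)-\int_s^t U_0(t,\tau)B(\tau)U(\tau,s)\,d\tau,
$$
where $B(\tau)$ is multiplication by the off-diagonal matrix $(b_{jk}(\cdot,\tau))_{j\ne k}$, so $\|B(\tau)\|\le(n-1)\max_{j\ne k}\sup_x|b_{jk}(x,\tau)|$. By \reff{f143l} (and finiteness of the number of pairs $j\ne k$) one can fix $c>0$ with $\|B(\tau)\|<\mu/2$ for all $|\tau|\ge c$. On the half-line $[c,\infty)$, writing this identity for $U(s,t)$ with $c\le s\le t$ and inserting $\|U_0(s,\tau)\|\le e^{-\mu(\tau-s)}$, one gets $\|U(s,t)\|\le e^{-\mu(t-s)}+\frac{\mu}{2}\int_s^t e^{-\mu(\tau-s)}\|U(\tau,t)\|\,d\tau$, and Gronwall's inequality applied to $f(s):=e^{\mu(t-s)}\|U(s,t)\|$ gives $\|U(s,t)\|\le e^{-(\mu/2)(t-s)}$; so $U$ has an exponential dichotomy with $P=0$ on $[c,\infty)$, and the same argument gives one on $(-\infty,-c]$. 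On the compact square $[-c,c]^2$, strong continuity of $U$ together with the uniform boundedness principle yields $M_0:=\sup_{s,t\in[-c,c]}\|U(t,s)\|<\infty$; splitting an arbitrary pair $s\le t$ at the points $\mp c$ and composing the (at most three) resulting estimates then produces $\|U(s,t)\|\le M_0 e^{\mu c}e^{-(\mu/2)(t-s)}$ for all $t\ge s$, which is precisely the asserted exponential dichotomy on $\R$ with $P=0$.

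The main obstacle is that the off-diagonal part $B(\tau)$ is \emph{not} uniformly small on all of $\R$, only for large $|\tau|$, so one cannot invoke roughness of exponential dichotomy on $\R$ in one stroke; the device above — localise to the two half-lines on which $B$ is uniformly small, obtain the trivial dichotomy there by an elementary Gronwall estimate, and glue across the bounded time window — is what circumvents this. Secondary points needing care are the interplay of the estimate for $U_0$ with the periodic boundary condition (handled by the shift-equivariance of the characteristic flow) and the control of $U$, together with its inverses, on the compact interval (handled by Banach--Steinhaus). Condition \reff{f9l} is not used on this route; it would become relevant for an alternative argument in which a cutoff change of variables built from the functions $\tilde b_{jk}$ reduces the bounded time window to a situation already covered by Theorem \ref{thm:th3}.
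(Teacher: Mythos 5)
Your argument is correct, and it takes a genuinely different route from the paper, so let me compare them. The paper invokes the Latushkin--Randolph--Schnaubelt criterion (Theorem~\ref{thm:thD}), reduces the dichotomy question to the bijectivity of $I-C-D$ on $BC_{per}(\R^2;\R^n)$, and then proves (i) that $I-C-D$ is Fredholm of index zero (Theorem~\ref{thm:th12}), via a compactness argument for $D^2$ and $DC$ which rests on Arzel\`a--Ascoli, Nikolsky's criterion, and an integration by parts along characteristics for which \reff{f9l} is indispensable, and (ii) that $I-C-D$ is injective (Theorem~\ref{thm:thuni}), by solving the integral equations on $Q^{-T}$ or $Q_T$ by contraction and then propagating forward. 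Your proposal bypasses all of this: it exploits the extra structure that under \reff{k0} (respectively \reff{k00}) \emph{all} diagonal entries push in the same direction, so the candidate projection is simply $P=0$ (respectively $P=I$), and you verify the defining inequality $\|U(s,t)\|\le Me^{-\omega(t-s)}$ directly. The three ingredients — the backward variation-of-constants identity (which follows from \reff{relf} by left- and right-multiplication by $U_0(s,t)$ and $U(s,t)$, a point you gloss over but which is easy to make explicit), the Gronwall estimate on the two half-lines where $\|B(\tau)\|$ is small, and the Banach--Steinhaus bound on the compact middle window, combined multiplicatively via the cocycle property — are all sound, and the bookkeeping in the gluing step checks out with $M=M_0e^{\mu c}$.

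Two points worth stating explicitly. First, your route yields strictly more information than the paper's: it identifies the dichotomy projection as $0$ or $I$, i.e.\ the evolution is purely unstable or purely stable, which the paper's abstract Fredholm argument cannot see. Second — and this is the substantive observation — your proof does not use \reff{f9l} at all, whereas the paper uses it crucially in the compactness step. So your argument proves a stronger statement under weaker hypotheses: \reff{f5l}, \reff{f143l}, and one of \reff{k0}, \reff{k00} already suffice. The paper's Fredholm machinery is of course designed for a broader class of problems (it is the same machinery underlying Theorem~\ref{thm:th3}, where no sign condition on $b_{jj}$ of the same type across $j$ is imposed and the projection is genuinely nontrivial), so the authors' approach is not wasted; but for Theorem~\ref{thm:th2} specifically your elementary Gronwall-and-glue argument is both shorter and sharper.
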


\begin{rem}\rm
The condition \reff{f143l} implies that $b_{jk}(x,t)\to0$ as $t\to\pm\infty$ uniformly in $x\in\R$
for each $j\neq k$. 
In particular, \reff{f143l} is satisfied for time-constant or time-periodic "non-diagonal" coefficients if and only if they are identically zero. This is a disadvantage of Theorem \ref{thm:th2}. 
However, the advantage of Theorem \ref{thm:th2} is that the "non-diagonal" 
coefficients  have to be small only for large $|t|$ rather than uniformly over $x$ and $t$ (like in Theorem \ref{thm:th3}).
\end{rem}

Our approach to proving  Theorems \ref{thm:th3} and \ref{thm:th2}
is based on  the following criterion \cite[Theorem 1.1]{Latn}:
\begin{thm} \label{thm:thD}
A strongly continuous, exponentially bounded evolution family $\{{U}(t,s)\}_{t\ge s}$ on a Banach space $X$ has an exponential 
dichotomy on $\R$ if and only if for every bounded and continuous map $\tilde{f}:\R\to X$ there exists a unique bounded and continuous 
map $\tilde{u}:\R\to X$ such that
\beq\label{mild}
\tilde{u}(t)={U}(t,s)u(s)+\int_{s}^{t}U(t,\tau)\tilde{f}(\tau)d\tau \;\;\;\mbox{for all}\;\;\; t\geq s.
\ee 
\end{thm}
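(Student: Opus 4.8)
The plan is to establish the two implications of this Perron-type criterion separately, the first being constructive and the second being the substantial one.

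\emph{Necessity} (exponential dichotomy $\Rightarrow$ unique mild solvability). Let $P$ be the dichotomy projection and put $P(t):=U(t,0)PU(0,t)$; using that every $U(t,s)$ is invertible, the hypothesis unpacks into the intertwining relation $U(t,\tau)P(\tau)=P(t)U(t,\tau)$ together with $\|U(t,\tau)P(\tau)\|\le Me^{-\omega(t-\tau)}$ for $\tau\le t$ and $\|U(t,\tau)(I-P(\tau))\|\le Me^{-\omega(\tau-t)}$ for $\tau\ge t$. Given bounded continuous $\tilde f:\R\to X$, I would define the Green operator
$$
\tilde u(t):=\int_{-\infty}^{t}U(t,\tau)P(\tau)\tilde f(\tau)\,d\tau-\int_{t}^{\infty}U(t,\tau)(I-P(\tau))\tilde f(\tau)\,d\tau .
$$
The two exponential estimates make this integral absolutely convergent, with $\|\tilde u\|_\infty\le(2M/\omega)\|\tilde f\|_\infty$, and a routine argument gives continuity of $\tilde u$; substituting the definition and splitting the range of integration at $s$ and at $t$, while using $U(t,s)U(s,\tau)=U(t,\tau)$, verifies that $\tilde u$ solves \reff{mild}. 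For uniqueness it suffices to show that $\tilde f\equiv0$ forces $\tilde u\equiv0$: then $\tilde u(t)=U(t,s)\tilde u(s)$ for $t\ge s$, so $P(t)\tilde u(t)=U(t,s)P(s)\tilde u(s)\to0$ as $s\to-\infty$ (whence $P(t)\tilde u(t)=0$), and $(I-P(s))\tilde u(s)=U(s,t)(I-P(t))\tilde u(t)\to0$ as $t\to+\infty$ (whence $(I-P(s))\tilde u(s)=0$); thus $\tilde u\equiv0$.

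\emph{Sufficiency.} First, the closed graph theorem shows that the solution map $\mathcal G:\tilde f\mapsto\tilde u$ on $C_b(\R;X)$ is bounded: if $\tilde f_n\to\tilde f$ and $\mathcal G\tilde f_n\to v$ uniformly, then, since on every compact interval $[s,t]$ the map $g\mapsto\int_s^tU(t,\tau)g(\tau)\,d\tau$ is bounded (strong continuity and local boundedness of $U$ suffice), $v$ satisfies \reff{mild} with forcing $\tilde f$, hence $v=\mathcal G\tilde f$ by the uniqueness clause; write $N:=\|\mathcal G\|$. I would then pass to the \emph{evolution semigroup} $\{E^\sigma\}_{\sigma\ge0}$ defined by $(E^\sigma g)(t):=U(t,t-\sigma)g(t-\sigma)$, which is strongly continuous on the closed subspace $C_0(\R;X)$ of functions vanishing at $\pm\infty$, and whose generator $-\Gamma$ satisfies: $\tilde u\in D(\Gamma)$ and $\Gamma\tilde u=\tilde f$ if and only if $\tilde u$ solves \reff{mild} with forcing $-\tilde f$. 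Consequently the hypothesis is equivalent to $0\in\rho(\Gamma)$, i.e.\ to hyperbolicity of $\{E^\sigma\}$ (via the spectral mapping theorem for evolution semigroups), and one concludes by the classical equivalence between hyperbolicity of $\{E^\sigma\}$ and the existence of an exponential dichotomy for $\{U(t,s)\}$ on $\R$ (see \cite{Latn} and the references therein); the required projection is the transported spectral projection of $E^1$.

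The main obstacle lies entirely in sufficiency: turning the bare boundedness of $\mathcal G$ into genuine uniform exponential estimates. Whether one argues through the evolution semigroup or directly, the heart of the matter is (i) showing that for each $s$ the stable fibre $X_s^+:=\{x\in X:\sup_{t\ge s}\|U(t,s)x\|<\infty\}$ and the unstable fibre $X_s^-$ (the set of $x$ admitting a bounded backward trajectory that equals $x$ at time $s$) are complementary closed subspaces whose projection is bounded uniformly in $s$ --- with $X=X_s^+\oplus X_s^-$ obtained by applying $\mathcal G$ to forcing terms cut off near $s$, and $X_s^+\cap X_s^-=\{0\}$ coming from uniqueness --- and (ii) upgrading boundedness of the trajectories on these fibres to exponential decay, the classical step that exploits the independence of $N$ from the length of the time interval through a discrete rescaling argument. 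Reconciling the $C_b$ hypothesis with the $C_0$ setting in which $\{E^\sigma\}$ is strongly continuous is a further technical point, but it is routine once the fibre splitting and the uniform bounds are available.
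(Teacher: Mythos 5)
The paper does not prove this statement at all: Theorem \ref{thm:thD} is quoted verbatim from \cite[Theorem 1.1]{Latn} and used as a black box, so there is no internal proof to compare against. Your proposal is, in effect, an outline of the proof in that reference. The necessity direction is complete and correct: the Green's-operator formula you write down is exactly the one recorded in the paper's remark on Green's functions (note that $U(t,\tau)P(\tau)=U(t,0)PU(0,\tau)$ and $U(t,\tau)(I-P(\tau))=U(t,0)(I-P)U(0,\tau)$, so your kernel is the paper's $G(t,\tau)$), and the uniqueness argument via letting $s\to-\infty$ on the stable part and $t\to+\infty$ on the unstable part is the standard one. Your appeal to invertibility of $U(t,s)$ is legitimate here because the paper's definition of dichotomy (in Theorem \ref{thm:th3}) already presupposes a two-sided family, and Theorem \ref{thm:th1} supplies invertible operators; for a general evolution family on a Banach space one would instead have to phrase the unstable estimate via invertibility on the ranges of $I-P(t)$ only.

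The sufficiency direction, which you correctly identify as the substantial one, remains a plan rather than a proof: the closed-graph step is fine, but the passage to the evolution semigroup on $C_0(\R;X)$, the identification of its generator with the mild-solution operator, the spectral mapping theorem, and the equivalence of hyperbolicity of $\{E^\sigma\}$ with dichotomy of $\{U(t,s)\}$ are each nontrivial theorems that you cite rather than prove --- and citing \cite{Latn} for the last of these is close to citing the result being proved. Likewise, the reconciliation of the $C_b$ hypothesis with the $C_0$ setting, and the direct alternative you sketch (closedness and uniform complementarity of the fibres $X_s^{\pm}$, then upgrading boundedness to exponential decay), are named but not carried out. So the proposal is a faithful road map of the known proof, with one direction done in full and the other left at the level of correctly identified lemmas; since the paper itself treats the theorem as an external citation, this is arguably the appropriate level of detail, but it should not be mistaken for a self-contained proof.
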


For proving  Theorems \ref{thm:th3} and \ref{thm:th2}, we 
set $X=C_{per}(\R;\R^n)$ and
apply Theorem \ref{thm:thD} to the evolution family $U(t,s)$ generated by the initial-boundary 
value problem \reff{f1l}--\reff{f3l} according to Theorem \ref{thm:th1}. We do this
as explained below. 

Let $\tilde{f} :\R\to C_{per}(\R;\R^n)$ be a bounded and continuous map.
We first show (cf.~Lemma \ref{lem:31}) that a
bounded and continuous map 
$\tilde{u}:\R\to C_{per}(\R;\R^n)$ is a solution to \reff{mild} 
if and only if  $f(x,t):=[\tilde{f}(t)](x)$ and  $u(x,t):=[\tilde{u}(t)](x)$
satisfy equations
\begin{eqnarray}
\lefteqn{
u_j(x,t)=c_j(0,x,t)u_j(1,\tau_j(0,x,t))}\nonumber\\ [2mm] &&
-\int_{0}^{x}d_j(\xi,x,t)\left(\sum_{k\neq j} b_{jk}(\xi,\tau_j(\xi,x,t))u_k(\xi,\tau_j(\xi,x,t))-f_j(\xi,\tau_j(\xi,x,t))\right)d\xi
\label{f10a}
\end{eqnarray} 
for all $j\le n$, where
\beq\label{f8}
c_j(\xi,x,t):=
\exp\left(-\int_{\xi}^{x}\frac{b_{jj}(\eta,\tau_j(\eta,x,t))}{a_j(\eta,\tau_j(\eta,x,t))}d\eta\right), \;\;\;
d_j(\xi,x,t):=
\frac{c_j(\xi,x,t)}{a_j(\xi,\tau_j(\xi,x,t))}.
\ee
Here, for a given $(x,t)\in\R^2$, by $\tau_j(\xi,x,t)$ we denote 
the solution to the initial value problem
\beq
\label{f7l}
\d_{\xi}\tau_{j}(\xi,x,t)=\frac{1}{a_j(\xi,\tau_{j}(\xi,x,t))}, \;\;\; \tau_{j}(x,x,t)=t,
\ee
i.e. $\tau_j(\cdot,x,t)=\xi_j(\cdot,x,t)^{-1}$.

Now, on the account of Theorem \ref{thm:thD}, the  existence of an exponential dichotomy  is reduced 
to the unique solvability of \reff{f10a} for every $f$. To prove the last fact,
we rewrite the system of integral equations \reff{f10a} in the operator form
$$
 u=Cu+Du+Ff
$$
with certain linear bounded operators 
$C,D$ and $F$.
Assumptions  \reff{f5l} and \reff{betacond} of Theorem \ref{thm:th3}
imply that $I-C$ is invertible  (cf.~Lemma \ref{lem:iso}), 
while  Assumptions \reff{a--}--\reff{a+-} imply  that
$$
\|D\|<\frac{1}{\|(I-C)^{-1}\|}
$$
(cf.\ Lemma \ref{lem:Dest} and Corollary \ref{cor:1}).
This, in its turn, gives the invertibility of $I-C-D$, as desired.
Assumptions of Theorem \ref{thm:th2} ensure that the operator 
 $I-C-D$ is Fredholm of index zero and that it is injective, what immediately gives the desired
 bijectivity.

\begin{rem}\rm
The well-known relationship between  the exponential dichotomy and the Green's function
(see e.g. the proof of  \cite[Theorem 1.1]{Latn}) 
can be stated as follows.
Suppose that the assumptions of Theorem \ref{thm:th3} or Theorem \ref{thm:th2} are fulfilled.
Let ${U}(t,s)$ be the evolution family on $C_{per}(\R;\R^n)$
generated by the problem \reff{f1l}--\reff{f3l}.
Then for every bounded function $f:\R\to C_{per}(\R;\R^n)$ the equation 
\reff{mild} has a unique  bounded continuous solution
$u :\R\to C_{per}(\R;\R^n)$ given by the Green's formula
$$
u(t)=\int_{-\infty}^\infty G(t,s)f(s)\,ds,
$$
where
$$
 G(t,s)=\left\{
 \begin{array}{ll}
 U(t,0)PU(0,s) &\mbox{for}\ t>s,\\
-U(s,0)(I-P)U(0,t)  &\mbox{for}\ t<s.
\end{array}
\right.
$$
\end{rem}

The paper is organized as follows. Section \ref{sec:expl}  provides examples showing that the
  assumptions of Theorems \ref{thm:th3} and \ref{thm:th2} are essential.
In Section \ref{sec:proofth1} we give a proof of Theorem \ref{thm:th1} about the existence 
of an evolution family. In Section \ref{sec:equiv}
we establish an equivalence between the mild and weak continuous solution concepts.
 Theorems \ref{thm:th3} and \ref{thm:th2} are proved 
in  Sections \ref{sec:proofth3a} and \ref{sec:proofth2}, respectively. 
 Section \ref{sec:openproblems} contains a concluding discussion and  open problems.

\section{Examples}\label{sec:expl} 

\renewcommand{\theequation}{{\thesection}.\arabic{equation}}
\setcounter{equation}{0}

\begin{ex}\rm
Consider the $2\times 2$-hyperbolic system with non-zero constant coefficients
\beq \label{hse1}
\begin{array}{cc}
    \partial_t u_1+a_1\partial_x u_1+ b_{11}u_1+b_{12}u_2=0, &  \\
    \partial_t u_2+a_2\partial_x u_2+ b_{21}u_1+b_{22}u_2=0 & 
\end{array}
\ee
subjected to the periodic conditions in the space variable 
\beq \label{hse2}
u_1(x,t)=u_1(x+1,t), \;\;\; u_2(x,t)=u_2(x+1,t).
\ee

The problem \reff{hse1}--\reff{hse2} has
constant nontrivial solutions (which obviously prevents an exponential dichotomy on $\R$)
iff 
\beq\label{k2}
b_{11}b_{22}-b_{12}b_{21}=0.
\ee
On the other hand, the assumptions \reff{a--}--\reff{a+-} of Theorem \ref{thm:th3}
for the problem \reff{hse1}--\reff{hse2} are equivalent to \reff{n=2}. This implies the  
inequalities
\beq
\label{bcond}
|b_{12}|< |b_{11}| \mbox{ and } |b_{21}| < |b_{22}|,
\ee
contradicting to \reff{k2}. It follows that the assumptions \reff{a--}--\reff{a+-} are essential
for the statement of  Theorem \ref{thm:th3}.

The problem \reff{hse1}--\reff{hse2} has $x$-independent non-constant time-periodic solutions
(what, again, prevents an exponential dichotomy on $\R$) if the ODE system
\beq\label{k7}
u_1'+b_{11}u_1+b_{12}u_2=u_2'+b_{21}u_1+b_{22}u_2=0
\ee
has non-constant periodic solutions.
The characteristic equation corresponding to this system reads 
\beq\label{k8}
\lambda^2-\lambda(b_{11}+b_{22})+(b_{11}b_{22}-b_{12}b_{21})=0
\ee
The system \reff{k7} has non-constant  time-periodic solutions 
iff the equation  \reff{k8}  has nonzero purely complex solutions. 
The latter is true iff  
$$
b_{22}=-b_{11}, \;\;\; b_{11}^2<-b_{12}b_{21},
$$
which again contradicts to \reff{bcond} and, hence,  to the
assumptions \reff{betacond}--\reff{a+-} of Theorem \ref{thm:th3}. 

Note also that the problem \reff{hse1}--\reff{hse2}  does not satisfy the condition \reff{f143l} of Theorem~\ref{thm:th2}.
\end{ex}

\begin{ex} \rm
Recall that a dichotomy system is exponentially stable if the dichotomy projection coincides with the identity operator. We now show that the assumptions of Theorems \ref{thm:th3} and \ref{thm:th2} do not necessarily imply  the exponential stability.

Suppose that $n=2$ and consider the decoupled system
\begin{equation}\label{k9}
 \begin{array}{cc}
\displaystyle\partial_tu_1+a_1(x,t)\partial_xu_1+b_{11}(x,t)u_1=0, & \\ [2mm]
 \displaystyle\partial_tu_2+a_2(x,t)\partial_xu_2+b_{22}(x,t)u_2=0  &
\end{array}
\end{equation}
with the conditions \reff{f2l} and \reff{f3l}. Suppose that, in addition to the conditions \reff{f5l} and \reff{betacond}, we have $b_{11}>0$ and $b_{22}<0$. The solution to \reff{k9}, \reff{f2l}, \reff{f3l} is given by the formulas
$$
u_{1}(x,t)=\exp{\left(-\int_{s}^{t}b_{11}(\xi_1(r,s,t),r)dr\right)u_{1}^{s}(\xi_{1}(s,x,t))},
$$
$$
u_{2}(x,t)=\exp{\left(-\int_{s}^{t}b_{22}(\xi_2(r,s,t),r)dr\right)u_{2}^{s}(\xi_{2}(s,x,t))}.
$$
It follows  that $u_1$ exponentially decays as $t\to\infty$, while $u_2$ exponentially decays as $t\to-\infty$, for any $u^s\in C_{per}(\R;\R^2)$. One can easily define the dichotomy projection as $Pu=(u_1,0)$, hence $(I-P)u=(0,u_2)$. Since $P\ne I$, the problem is not  exponentially stable. 
\end{ex}

\section{Proof of Theorem \ref{thm:th1}}\label{sec:proofth1}
\setcounter{equation}{0}

Assuming  that the the condition \reff{f5l} is fulfilled, we have to prove that 
the problem \reff{f1l}--\reff{f3l} generates an exponentially bounded evolution family 
$U(t,s)$ on $C_{per}(\R;\R^n)$. For the proof we use \cite[Theorem 2.1]{Km}
stating that under  the zero-order compatibility conditions between \reff{f2l} and \reff{f3l},
which are automatically fulfilled for $u^{s}\in C_{per}(\R;\R^n)$,  the system \reff{f8l}
has  a unique continuous solution. This means that 
there exists a unique strongly continuous evolution family 
$U(t,s)$
on $C_{per}(\R;\R^n)$ associated to \reff{f1l}--\reff{f3l}.
To prove that $U(t,s)$ is exponentially bounded, we use the following a priory estimate derived in the
 proof of \cite[Theorem 2.1]{Km}:
\begin{equation}\label{es1}
\max_{j,x}\max_{s\le\tau\le t}|u_{j}|
\le (3+2n)^{\frac{t-s}{\theta}}\|u^{s}\|\quad \mbox{ for all } t\ge s,
\end{equation}
where 
$$
\theta=
\min\left\{\left(2\sup_{j,x,t}|a_j|\right)^{-1},\left(2n(n+1)\sup_{j,k,x,t}|b_{jk}|\right)^{-1}\right\}.
$$
Since $u=U(t,s)u^{s}$, then from \reff{es1} we get
$$
\|U(t,s)\|\le(3+2n)^{\frac{t-s}{\theta}}\le\exp{\left\{\frac{\log{(3+2n)}}{\theta}(t-s)\right\}}.
$$
This means that the estimate \reff{k3} is true with $K=1$ and $\om=\theta^{-1}\log{(3+2n)}$.
Note that here  we essentially use the boundedness of 
$a_j$ and $b_{jk}$.

 Theorem \ref{thm:th1} is therewith proved.

\section{Existence of an exponential dichotomy on $\R$}
\label{sec:ED} 
\setcounter{equation}{0}

\subsection{Equivalence of the mild and weak solution concepts}
\label{sec:equiv}

Here we establish the equivalence  between  the mild and weak continuous solution concepts, i.e. the equivalence of the equations \reff{mild} and \reff{f10a}, respectively.

Let $BC(\R;C_{per}(\R;\R^n))$ be the Banach space of all bounded and continuous maps 
$u:\R \to C_{per}(\R;\R^n)$, with the norm
$$
\|u\|_\infty:=\sup_{t \in \R}\|u(t)\|,
$$
where $\|\cdot\|$ is the norm in $C_{per}(\R;\R^n)$ introduced in \reff{norm}.
As usual, we identify functions $u \in BC(\R;C_{per}(\R;\R^n))$ with functions  $\tilde{u} \in BC_{per}(\R^2;\R^n)$
by means of $\tilde{u}(x,t)=[u(t)](x)$. Below we will use the same notation for the corresponding elements 
of the two spaces.

To shorten notation, we will write $\tau_j(\xi)=\tau_j(\xi,x,t)$ and $\xi_j(\tau)=\xi_j(\tau,x,t)$.

\begin{lemma}\label{lem:31}
For given  $f \in BC(\R;C_{per}(\R;\R^n))$, the function  $u \in BC(\R;C_{per}(\R;\R^n))$
satisfies  \reff{mild}  if and only if the corresponding function $u \in BC_{per}(\R^2;\R^n)$ 
satisfies  \reff{f10a} with  the corresponding function $f \in BC_{per}(\R^2;\R^n)$.
\end{lemma}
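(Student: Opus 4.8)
The plan is to show that the mild equation \reff{mild}, required to hold for all $t\ge s$, is equivalent (for bounded continuous $u$) to the pointwise identity \reff{mild} being satisfied "along characteristics", and then to reduce the latter by integration along the $j$-th characteristic to the weak form \reff{f10a}. The essential point is that \reff{mild} is a statement about the whole family $\{U(t,s)\}_{t\ge s}$, whereas \reff{f10a} is a pointwise relation in $(x,t)$; bridging the two requires unwinding the action of $U(t,s)$ on a fixed function via the representation formula \reff{f8l}.

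First I would reformulate \reff{mild}. For fixed $s$, write $v(x,t):=[\tilde u(t)](x)$ and recall that $[U(t,s)\tilde u(s)](x)$ is, by Theorem \ref{thm:th1} and formula \reff{f8l}, the value at $(x,t)$ of the solution of \reff{f1l}--\reff{f3l} with initial data $\tilde u(s)$ at time $s$; similarly $\int_s^t U(t,\tau)\tilde f(\tau)\,d\tau$ is, componentwise, the Duhamel term obtained by integrating the inhomogeneity $\tilde f$ along the $j$-th characteristic. Plugging these into \reff{mild} and using the variation-of-constants structure of \reff{f8l} (the exponential of the diagonal term $b_{jj}$ as integrating factor, the off-diagonal terms $b_{jk}$, $k\neq j$, and now also $f_j$, entering as source terms transported along $\xi_j$), one gets that \reff{mild} for all $t\ge s$ is equivalent to: for every $s$ and every $t\ge s$,
\beq
u_j(x,t)=\exp\left(-\int_s^t b_{jj}(\xi_j(r),r)\,dr\right)u_j(\xi_j(s),s)
-\int_s^t\exp\left(-\int_\tau^t b_{jj}(\xi_j(r),r)\,dr\right)\sum_{k\neq j}b_{jk}(\xi_j(\tau),\tau)u_k(\xi_j(\tau),\tau)\,d\tau
+\int_s^t\exp\left(-\int_\tau^t b_{jj}(\xi_j(r),r)\,dr\right)f_j(\xi_j(\tau),\tau)\,d\tau
\label{pf:milddiff}
\ee
for all $(x,t)\in\R^2$, $j\le n$. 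Here I use that $\tilde u,\tilde f$ are bounded and continuous, so all the integrals converge and depend continuously on the parameters, and that $\xi_j$ is a well-defined global flow because of \reff{f5l} and the regularity assumed on $a_j$.

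Next I would pass from the $t$-integral form \reff{pf:milddiff} to the $x$-integral form \reff{f10a} by a change of variables. Along the $j$-th characteristic through $(x,t)$, parametrize by the space coordinate $\xi$ instead of time: since $\tau_j(\cdot,x,t)=\xi_j(\cdot,x,t)^{-1}$ solves \reff{f7l}, we have $d\tau=d\xi/a_j(\xi,\tau_j(\xi,x,t))$, and the exponential integrating factor becomes exactly $c_j(\xi,x,t)$ as in \reff{f8}, with $d_j=c_j/a_j$. Choosing the "base point" of the characteristic to be where it meets the line $\{x=1\}$ — i.e. evaluating at $\xi=1$, which by $1$-periodicity in $x$ is the natural normalization on the torus — turns the transported-initial-data term of \reff{pf:milddiff} into $c_j(0,x,t)u_j(1,\tau_j(0,x,t))$ after also accounting for the segment of the characteristic between $\xi=0$ and $\xi=x$; the remaining $t$-integrals over the source terms become the $\xi$-integral $-\int_0^x d_j(\xi,x,t)(\sum_{k\neq j}b_{jk}u_k - f_j)\,d\xi$ in \reff{f10a}. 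This is essentially the same "integration along characteristics" computation already invoked in the derivation of \reff{f8l} in the introduction, now done with $x$ as the independent variable and on the periodic strip; I would carry it out once carefully for one index $j$ and note it is identical for the others. Conversely, differentiating \reff{f10a} along characteristics and using $1$-periodicity recovers \reff{pf:milddiff}, hence \reff{mild}.

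The main obstacle I anticipate is bookkeeping rather than conceptual: one must be careful that \reff{mild} is only assumed for $t\ge s$, so the equivalence with the pointwise statement \reff{pf:milddiff} needs the semigroup/evolution property $U(t,s)=U(t,r)U(r,s)$ together with continuity in $(t,s)$ (Theorem \ref{thm:th1}) to propagate the identity correctly, and that the change of variables from the $t$-line to the $x$-line along each $\xi_j$ uses \reff{f5l} to guarantee $a_j$ does not vanish, so $\tau_j(\cdot,x,t)$ is a genuine diffeomorphism and the substitution $d\tau=d\xi/a_j$ is legitimate globally. A secondary point requiring care is the identification of $BC(\R;C_{per}(\R;\R^n))$ with $BC_{per}(\R^2;\R^n)$: one should check that $u\in BC(\R;C_{per})$ corresponds precisely to a bounded continuous $\R^2\to\R^n$ function that is $1$-periodic in $x$ (joint continuity following from strong continuity plus local uniform boundedness), so that both sides of the claimed equivalence live in the same function class.
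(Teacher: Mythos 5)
Your two-step structure matches the paper's: reduce \reff{mild} to the $t$-integral form along characteristics (the paper's equation \reff{contdm}), then convert that to the $x$-integral form \reff{f10a}. For the second step your change of variables $d\tau = d\xi/a_j$ with base point where the characteristic crosses $\{\xi=0\}$ is a cleaner route than the paper's constancy-theorem-of-distributions argument, with two caveats: the converse direction as you state it (``differentiating \reff{f10a} along characteristics'') is illegitimate for merely continuous $u$ --- run the substitution backwards instead --- and for $a_j<0$ the time $s=\tau_j(0,x,t)$ exceeds $t$, so you must first note that \reff{mild} for $t\ge s$ extends to all $t,s$ via invertibility of $U(t,s)$ (Theorem \ref{thm:th1}) before the substitution is even available.

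The genuine gap is in your first step. You assert that \reff{mild} is equivalent to \reff{contdm} by ``plugging in'' the representation of $U(t,s)u(s)$ via \reff{f8l} and invoking the ``variation-of-constants structure'', but this is not a plug-in: the Duhamel integral $\int_s^t U(t,\tau)f(\tau)\,d\tau$ involves the fully coupled evolution $U$, whereas \reff{contdm} carries only the diagonal transport (the single exponential of $b_{jj}$). Passing between them is exactly the exchange of the mild formulation with respect to $U$ for the mild formulation with respect to the decoupled family $U_0$, with the off-diagonal coupling $B(t)$ absorbed into the source. The paper does this (Claim~2) by introducing $U_0$ explicitly, establishing $U(t,s)=U_0(t,s)+\int_s^t U_0(t,\tau)B(\tau)U(\tau,s)\,d\tau$, and then interchanging the order of integration in the resulting double integral so that the cross terms cancel. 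Your proposal gestures at the conclusion without supplying this Fubini manipulation, which is the real content of the step; if you actually try to carry out the ``plug-in'' starting from \reff{f8l} you will be forced to rediscover it.
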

\begin{proof}
The proof is divided into two claims.

{\it Claim 1. Let $f \in BC_{per}(\R^2;\R^n)$. A function $u \in BC_{per}(\R^2;\R^n)$ satisfies
the system \reff{f10a} if and only if it satisfies the system
\begin{equation}\label{contdm}
\begin{array}{ll}
 u_j(x,t)=\displaystyle \exp{\left(\int_{t}^{s}b_{jj}(\xi_j(r),r)dr\right)}u_j^{s}(\xi_j(s))   \\
 -\displaystyle\int_{s}^{t}\exp{\left(\int_{t}^{\tau}b_{jj}(\xi_j(r),r)dr\right)}\left[\sum_{k\neq j}b_{jk}(\xi_j(\tau),\tau)u_{k}(\xi_{j}(\tau),\tau)-f_{j}(\xi_{j}(\tau),\tau)\right] d\tau.
\end{array}
\end{equation}
}

To prove Claim 1, note that \reff{f10a} and \reff{contdm} are two weak formulations of
the problem
 \begin{eqnarray}
\displaystyle &&\partial_{t}u_j
 +a_j(x,t)\partial_{x}u_j
 +\sum_{k=1}^{n}b_{jk}(x,t)u_k
 =f_j(x,t),
 \;\;\; (x,t)\in\R^2,\;\;\; j\le n,
 \label{f1lp}\\
\displaystyle  &&u_j(x+1,t)=u_j(x,t),\;\;\;(x,t)\in\R^2,\;\;\; j\le n,\label{f2lp}
\end{eqnarray}
both obtained by the integration along characteristic curves. 
Let us prove the sufficiency (the necessity is proved similarly).
Suppose that $u$ satisfies \reff{contdm}. One can easily compute the directional 
distributional derivative:
\begin{eqnarray}
\lefteqn{
(\d_t+a_j(x,t)\d_x)u_j(x,t)=
}\nonumber \\
&&\displaystyle -u_{j}^{s}(\xi_{j}(s))b_{jj}(x,t)
\exp{\int_{t}^{s}b_{jj}(\xi_j(r),r)dr}
\displaystyle
-\sum_{k\neq j}b_{jk}(x,t)u_{k}(x,t)+f_j(x,t)\nonumber \\ &&
\displaystyle
+b_{jj}(x,t)\int_{s}^{t}\exp{\int_{t}^{\tau}b_{jj}(\xi_j(r),r)dr}\left[\sum_{k\neq j}(b_{jk}u_{k})(\xi_{j}(\tau),\tau)-f_{j}(\xi_{j}(\tau),\tau)\right] d\tau\nonumber \\
&&\displaystyle
= -\sum\limits_{k=1}^nb_{jk}(x,t)u_{k}(x,t)+f_j(x,t),\quad j\le n,\nonumber
\end{eqnarray}
the last equality being true due to \reff{contdm}. Here we used that 
$$
(\d_t+a_j(x,t)\d_x)\xi_j(\tau,x,t)=0.
$$
Hence, the function $u$ solves the problem \reff{f1lp}--\reff{f2lp} 
where the differential equations are fulfilled in a distributional sense.
Without destroying the equalities in $\D^\prime$, we  rewrite the system
\reff{f1lp} in the form 
  \begin{equation}
(\partial_{t} +a_j(x,t)\partial_{x})\left(c_j(0,x,t)^{-1}u_j\right)
 +c_j(0,x,t)^{-1}\sum_{k\ne j}b_{jk}(x,t)u_k
 =c_j(0,x,t)^{-1}f_j(x,t),
 \;\;\;  j\le n.
 \label{i1}
 \end{equation}
To prove that $u$ satisfies \reff{f10a} pointwise, we use \reff{i1}
and the constancy theorem of distribution theory claiming
that any distribution on an open set with zero generalized derivatives
is a constant on any connected component of the set. As a consequence, the
function
\beq\label{i2}
G_j(x,t)=c_j(0,x,t)^{-1}\left[u_j(x,t)
 +\int_{0}^{x}d_j(\xi,x,t)\left(\sum_{k\neq j} 
(b_{jk}u_k)(\xi,\tau_j(\xi))-f_j(\xi,\tau_j(\xi))\right) d\xi\right]
\ee
is constant along the characteristic $\tau_j(\xi,x,t)$ for all $j\le n$.
Since $G_j(x,t)$ is a  continuous function, by the periodicity condition
 \reff{f2lp}, we get
\beq\label{i3}
G_j(x,t)=G_j(\xi,\tau_j(\xi))=G_j(0,\tau_j(0))=u_j(0,\tau_j(0))=u_j(1,\tau_j(0)).
\ee
Combining \reff{i2} with \reff{i3}, we obtain \reff{f10a}, completing the proof of Claim~1.

{\it Claim 2. Let $f\in BC_{per}(\R^2;\R^n)$. 
A function $u\in BC_{per}(\R^2;\R^n)$ satisfies 
 \reff{contdm} iff $u\in BC(\R;C_{per}(\R;\R^n))$ satisfies \reff{mild}
with $f\in BC(\R;C_{per}(\R;\R^n))$.}

To prove Claim 2, we introduce a two-parameter strongly continuous, exponentially bounded evolution family $U_{0}(t,s)\in\mathcal{L}(C_{per}(\R;\R^n))$, $t\geq s$, by
\begin{equation}\label{evf}
(U_{0}(t,s)u)(x)=\left[\exp{\left(\int_{t}^{s}b_{jj}(\xi_j(x,t,r),r)dr\right)}u_j(\xi_j(s),s)\right]_{j=1}^{n}.
\end{equation}
Let $B(t):C_{per}(\R;\R^n)\to C_{per}(\R;\R^n)$ be a one-parameter family of linear operators defined by
\begin{equation}\label{Boper}
    [B(t)v](x)=\left[-\sum_{k\neq j}b_{jk}(x,t)v_k(x)\right]_{j=1}^{n}.
\end{equation}
In terms of $U_{0}$ the  system \reff{contdm} with $f\equiv 0$ reads
$$
 u(t)=U_{0}(t,s)u(s)+\int_{s}^{t}U_{0}(t,\tau)B(\tau)u(\tau)\,d\tau.
$$
By the definition of the evolution operator $U(t,s)$ we have
$$
 U(t,s)u(s)=U_{0}(t,s)u(s)+\int_{s}^{t}U_{0}(t,\tau)B(\tau)U(\tau,s)u(s)\,d\tau,
$$
which gives us the  following relation between $U$  and $U_0$:
\begin{equation}\label{relf}
    U(t,s)=U_{0}(t,s)+\int_{s}^{t}U_{0}(t,\tau)B(\tau)U(\tau,s)\,d\tau.
\end{equation}

To prove the  sufficiency part of Claim 2, assume that 
 $u\in BC(\R;C_{per}(\R;\R^n))$ satisfies \reff{mild}. By \reff{relf},
the equation \reff{mild} can be written as
\begin{equation}
\begin{array}{cc}\label{lpo}
u(t)=\displaystyle U(t,s)u(s)+\int_{s}^{t} U(t,\tau)f(\tau)d\tau
 &  \\ [2mm]
=\displaystyle U_{0}(t,s)u(s)+\int_{s}^{t}U_{0}(t,\tau)B(\tau)U(\tau,s)u(s)d\tau
 &  \\ [2mm]
+\displaystyle \int_{s}^{t}U_{0}(t,\tau)f(\tau) d\tau+\int_{s}^{t}\int_{\tau}^{t}U_{0}(t,\sigma)B(\sigma)U(\sigma,\tau)f(\tau)\,d\sigma d\tau. &
\end{array}
\end{equation}
Plugging \reff{mild} into the second summand in the right-hand side of \reff{lpo} yields
\begin{equation}\label{starnine}
\begin{array}{cc}
u(t)=\displaystyle U_{0}(t,s)u(s)+\int_{s}^{t}U_{0}(t,\tau)B(\tau)u(\tau)\,d\tau-
\int_{s}^{t}\int^{\tau}_{s}U_{0}(t,\tau)B(\tau)U(t,\sigma)f(\sigma)\,d\sigma d\tau &  \\
+\displaystyle\int_{s}^{t}U_{0}(t,\tau)f(\tau)d\tau+\int_{s}^{t}\int_{\tau}^{t}U_{0}(t,\sigma)B(\sigma)U(\sigma,\tau)f(\tau)\,d\sigma d\tau. & 
\end{array}
\end{equation}
After the  changing of the order of integration in the last summand,
 the third and the last summands cancel out, and we get
\begin{equation}\label{conex}
 u(t)=U_{0}(t,s)u(s)+\int_{s}^{t}U_{0}(t,\tau)B(\tau)u(\tau)\, d\tau
 +\int_{s}^{t}U_{0}(t,\tau)f(\tau)\,d\tau.
\end{equation}
Taking into account \reff{evf} and \reff{Boper}, we see that \reff{conex} 
coincides with \reff{contdm}, as desired.

To prove the  necessity, let $u\in BC_{per}(\R^2;\R^n)$ satisfy \reff{contdm}. 
In terms of \reff{evf} and \reff{Boper}, the equation \reff{contdm} 
coincides with \reff{conex} or, the same, with \reff{starnine}.
Applying the formula \reff{relf} to \reff{starnine}, we get
\begin{equation}\label{cxcx}
\begin{array}{rcl}
\displaystyle u(t)&=&\displaystyle U_{0}(t,s)u(s)+\int_{s}^{t}U_{0}(t,\tau)B(\tau)u(\tau)\,d\tau
+\int_{s}^{t}U_{0}(t,\tau)f(\tau)\,d\tau
\\
\displaystyle &=&\displaystyle U_{0}(t,s)u(s)+\int_{s}^{t}U_{0}(t,\tau)B(\tau)\left[
u(\tau)-\int_{s}^{\tau}U(\tau,\sigma)f(\sigma)\,d\sigma
\right]\,d\tau
\\
\displaystyle &&\displaystyle +\int_{s}^{t}U_{0}(t,\tau)B(\tau)\int_{s}^{\tau}U(\tau,\sigma)f(\sigma)\,d\sigma d\tau
+\int_{s}^{t}U_{0}(t,\tau)f(\tau)\,d\tau
\\
\displaystyle &=&\displaystyle U_{0}(t,s)u(s)+\int_{s}^{t}U_{0}(t,\tau)B(\tau)\left[u(\tau)-\int_{s}^{\tau}U(\tau,\rho)f(\rho)\,d\rho\right]\,d\tau\\
 &&\displaystyle +\int_{s}^{t}U(t,\tau)f(\tau)\,d\tau.
\end{array}
\end{equation}
Set 
\begin{equation}\label{starten}
v(\tau)=u(\tau)-\int_{s}^{\tau}U(\tau,\rho)f(\rho)d\rho.
\end{equation}
Now \reff{cxcx} reads
\begin{equation}\label{vts}
v(t)=U_{0}(t,s)u(s)+\int_{s}^{t}U_{0}(t,\tau)B(\tau)v(\tau)d\tau.
\end{equation}
Combining \reff{vts} with \reff{relf}, we conclude that $v(t)=U(t,s)u(s)$. 
On the account of
\reff{starten}, the equation \reff{cxcx} admits the representation
$$
\begin{array}{cc}
u(t)=\displaystyle U_{0}(t,s)u(s)+\int_{s}^{t}U_{0}(t,\tau)B(\tau)U(\tau,s)u(s)d\tau+\int_{s}^{t}U(t,\tau)f(\tau)d\tau
&  \\
=\displaystyle U(t,s)u(s)+\int_{s}^{t}U(t,\tau)f(\tau)d\tau,
& 
\end{array}
$$
what finishes the proof of Claim~2.
\end{proof}
Lemma \ref{lem:31} readily follows from Claims~1 and~2.

\subsection{Proof of Theorem \ref{thm:th3}}\label{sec:proofth3a}

Assuming that the assumptions  
\reff{f5l}, \reff{betacond} and  \reff{a--}--\reff{a+-}
are fulfilled, we have to prove  that the evolution family of the original problem 
has an exponential dichotomy
on $\R$.  

Let us introduce  operators 
$C,D,F\in \LL(BC_{per}(\R^2;\R^n))$ by 
\begin{eqnarray*}
(Cu)_j(x,t)&:=&\exp\left(-\int_{0}^{x}\frac{b_{jj}(\eta,\tau_j(\eta,x,t))}{a_j(\eta,\tau_j(\eta,x,t))}d\eta\right)u_j(1,\tau_j(0,x,t)),\\
(Du)_j(x,t)&:=&
 -\int_{0}^{x}d_j(\xi,x,t)\sum_{k\neq j} b_{jk}(\xi,\tau_j(\xi,x,t))u_k(\xi,\tau_j(\xi,x,t)) d\xi,\\
(Ff)_j(x,t)&:=&\int_{0}^{x}d_j(\xi,x,t)f_j(\xi,\tau_j(\xi,x,t)) d\xi.
\end{eqnarray*}
Then the equation \reff{f10a} reads
$$
u=Cu+Du+Ff.
$$ 
On the account of Theorem \ref{thm:thD}  and Lemma \ref{lem:31}, we are reduced to show that,
given $f\in BC_{per}(\R^2;\R^n)$, the system \reff{f10a} has  a unique solution in 
$BC_{per}(\R^2;\R^n)$. In other words, we have to prove that the operator 
$I-C-D\in\LL(BC_{per}(\R^2;\R^n))$ is bijective. 

The proof will be divided into two lemmas.

\begin{lemma}\label{lem:iso}
Suppose that the conditions \reff{f5l} and \reff{betacond} are fulfilled. Then 
 the operator $I-C$ is bijective, and for all  $j\le n$ and $\|u\|=1$ it holds
\begin{eqnarray}
\label{--}
\left\|\left[(I-C)^{-1}u\right]_j\right\| \le  \frac{1}{1-e^{-\beta_j^-/\al_j^+}}  &\mbox{if} & \al_j^->0, \;\beta_j^->0,\\
\label{++}
\left\|\left[(I-C)^{-1}u\right]_j\right\| \le  \frac{1}{1-e^{-\beta_j^+/\al_j^-}}  &\mbox{if} & \al_j^+<0, \;\beta_j^+<0,\\
\label{-+}
\left\|\left[(I-C)^{-1}u\right]_j\right\| \le  \frac{e^{\beta_j^+/\al_j^+-\beta_j^-/\al_j^-}}{1-e^{\beta_j^+/\al_j^+}}+1
&\mbox{if} & \al_j^->0, \;\beta_j^+<0,\\
\label{+-}
\left\|\left[(I-C)^{-1}u\right]_j\right\| \le  \frac{e^{\beta_j^-/\al_j^--\beta_j^+/\al_j^+}}{1-e^{\beta_j^-/\al_j^-}}+1
&\mbox{if} & \al_j^+<0, \;\beta_j^->0.
\end{eqnarray}
\end{lemma}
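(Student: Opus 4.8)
The plan is to exploit that $C$ acts diagonally in the component index: $(Cu)_j$ depends only on $u_j$. So it suffices to invert, separately for each $j$, the scalar operator $C_j:u_j\mapsto c_j(0,\cdot,\cdot)\,u_j(1,\tau_j(0,\cdot,\cdot))$, and the four estimates \reff{--}--\reff{+-} will follow one $j$ at a time. Fix $j$. Assumption \reff{f5l} forces $a_j$ to have a definite sign ($\al_j^->0$ or $\al_j^+<0$), and \reff{betacond}, together with the continuity of $b_{jj}$ on the connected set $\R^2$, forces $b_{jj}$ to have a definite sign ($\beta_j^->0$ or $\beta_j^+<0$); the four sign combinations are exactly those in \reff{--}--\reff{+-}. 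In each of them $b_{jj}/a_j$ ranges over a compact subinterval of $(0,\infty)$ (if $a_jb_{jj}>0$) or of $(-\infty,0)$ (if $a_jb_{jj}<0$) whose endpoints lie among the ratios $\beta_j^\pm/\al_j^\pm$; this is what will produce the explicit constants.

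The core of the argument is a reduction to the trace at $x=1$. The equation $(I-C_j)v_j=u_j$ reads
\[
v_j(x,t)=c_j(0,x,t)\,v_j(1,\tau_j(0,x,t))+u_j(x,t),
\]
and, evaluated at $x=1$, it becomes a scalar functional equation for $w(t):=v_j(1,t)$,
\[
w(t)=\gamma(t)\,w(\tau_j(0,1,t))+u_j(1,t),\qquad
\gamma(t):=c_j(0,1,t)=\exp\Big(-\int_0^1\frac{b_{jj}(\eta,\tau_j(\eta,1,t))}{a_j(\eta,\tau_j(\eta,1,t))}\,d\eta\Big).
\]
The map $t\mapsto\tau_j(0,1,t)$ is a strictly increasing homeomorphism of $\R$ without fixed point, whose iterates drive $t$ to $-\infty$ if $a_j>0$ and to $+\infty$ if $a_j<0$, so iterating the functional equation simply composes $u_j(1,\cdot)$ with these iterates. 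If $a_jb_{jj}>0$ then $0<\gamma(t)\le e^{-\beta_j^-/\al_j^+}<1$ in case \reff{--} and $\le e^{-\beta_j^+/\al_j^-}<1$ in case \reff{++}; iterating in the direction of the flow gives the absolutely convergent series $w(t)=\sum_{m\ge0}\big(\prod_{\ell<m}\gamma(t_\ell)\big)u_j(1,t_m)$ with $t_0=t$, $t_{m+1}=\tau_j(0,1,t_m)$, hence $\|w\|_\infty\le\|u\|/(1-e^{-\beta_j^-/\al_j^+})$, resp.\ $\|u\|/(1-e^{-\beta_j^+/\al_j^-})$. If $a_jb_{jj}<0$ then $\gamma(t)\ge e^{-\beta_j^+/\al_j^+}>1$ in case \reff{-+} and $\ge e^{-\beta_j^-/\al_j^-}>1$ in case \reff{+-}; then one solves the relation for $w(\tau_j(0,1,t))$ and iterates in the opposite direction, again obtaining a geometric series, now with ratio $e^{\beta_j^+/\al_j^+}<1$, resp.\ $e^{\beta_j^-/\al_j^-}<1$. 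In all four cases this pins down a unique bounded $w$; uniqueness, hence injectivity of $I-C_j$, follows from the same iteration, since a bounded solution of the homogeneous relation $w=\gamma\cdot w(\tau_j(0,1,\cdot))$ obeys $|w(t)|\le(\mbox{iterated multiplier})\,\|w\|_\infty\to0$.

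With $w$ in hand I recover $v_j$ on the strip $\{0\le x\le1\}$ from $v_j(x,t)=c_j(0,x,t)\,w(\tau_j(0,x,t))+u_j(x,t)$ and extend it $1$-periodically in $x$; this is the preimage of $u_j$, so $I-C$ is onto, hence bijective. The estimates \reff{--}--\reff{+-} then follow from $\|v_j\|\le\big(\sup_{0\le x\le1}c_j(0,x,t)\big)\|w\|_\infty+\|u\|$ by using that, for fixed $(x,t)$, the weight $c_j(0,x,t)$ is monotone in $x$: it decreases from $1$ when $b_{jj}/a_j>0$, so $\sup_{0\le x\le1}c_j(0,x,t)=1$ in cases \reff{--}, \reff{++}; it increases from $1$ when $b_{jj}/a_j<0$, with $\sup_{0\le x\le1}c_j(0,x,t)\le e^{-\beta_j^-/\al_j^-}$ in case \reff{+-} and $\le e^{-\beta_j^+/\al_j^+}$ in case \reff{-+}. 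Collecting the constants yields the four inequalities.

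I expect the real obstacle to be the two cases with $a_jb_{jj}<0$: there the exponential weight grows over a single spatial period, so the functional equation for the trace must be iterated against the direction of the flow, and one has to check that the net contraction accumulated over infinitely many periods beats the per-period growth and produces exactly the ratios $e^{\beta_j^+/\al_j^+}$, $e^{\beta_j^-/\al_j^-}$ and the prefactors in \reff{-+}, \reff{+-}. A secondary technical point, easy to overlook, is to verify that the reconstructed $v_j$ really lies in the prescribed space — boundedness and the correct matching across $x=0$ — and that the geometric-series solution is the only bounded one, so that the invertibility holds in the intended Banach space.
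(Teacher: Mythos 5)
The overall strategy is essentially the same as the paper's for the two sign-indefinite cases \reff{-+}, \reff{+-}: restrict to the trace $x=1$, observe that the resulting scalar recursion $w(t)=\gamma(t)w(\tau_j(0,1,t))+u_j(1,t)$ can be iterated backwards (because $\gamma^{-1}$ is a uniform contraction), and then lift $w$ back to the strip. The paper packages this as the operator $\tilde C_j\in\LL(BC(\R))$ and uses $\tilde u_j=-(I-\tilde C_j^{-1})^{-1}\tilde C_j^{-1}\tilde f_j$ together with
$\|(I-C_j)^{-1}\|\le\|C_j\|\,\|(I-\tilde C_j^{-1})^{-1}\|\,\|\tilde C_j^{-1}\|+1$; this is exactly your "iterate the trace equation, then lift," so for \reff{-+}, \reff{+-} your argument reproduces the paper's. (Note a transcription slip: in case \reff{-+} one has $\sup_{0\le x\le 1}c_j(0,x,t)\le e^{-\beta_j^-/\al_j^-}$, and in case \reff{+-} one has $\le e^{-\beta_j^+/\al_j^+}$; you wrote these the other way around. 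After unswapping, your prefactors do combine to the claimed $\frac{e^{\beta_j^+/\al_j^+-\beta_j^-/\al_j^-}}{1-e^{\beta_j^+/\al_j^+}}+1$ and $\frac{e^{\beta_j^-/\al_j^--\beta_j^+/\al_j^+}}{1-e^{\beta_j^-/\al_j^-}}+1$.)

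There is, however, a genuine gap in the two aligned cases \reff{--}, \reff{++}. You apply the same "trace + lift" scheme, obtain $\|w\|_\infty\le\|u\|/(1-e^{-\beta_j^-/\al_j^+})$ (resp.\ $\|u\|/(1-e^{-\beta_j^+/\al_j^-})$), and then bound $v_j$ via $\|v_j\|\le(\sup_x c_j)\|w\|_\infty+\|u\|$ with $\sup_x c_j=1$. This yields
\begin{equation*}
\|v_j\|\le\frac{1}{1-e^{-\beta_j^-/\al_j^+}}+1
\qquad\text{(resp.\ }\tfrac{1}{1-e^{-\beta_j^+/\al_j^-}}+1\text{),}
\end{equation*}
which is strictly larger than the claimed bound $\frac{1}{1-e^{-\beta_j^-/\al_j^+}}$. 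The extra ``$+1$'' comes from the additive $u_j(x,t)$ in the lift and cannot be absorbed by $\sup_x c_j=1$; the constant matters because Lemma \ref{lem:Dest} multiplies $\|(I-C_j)^{-1}\|$ by $\|(Du)_j\|=\frac{\beta_j}{\beta_j^-}\frac{\al_j^+}{\al_j^-}(1-e^{-\beta_j^-/\al_j^+})$, so with your weaker bound the hypothesis \reff{a--} would no longer suffice to make $\|(I-C)^{-1}D\|<1$. The paper avoids this by \emph{not} passing through the trace in these two cases: since $-b_{jj}/a_j\le -\beta_j^-/\al_j^+<0$ (resp.\ $\le -\beta_j^+/\al_j^-<0$), one has the direct operator estimate $\|C_j\|\le e^{-\beta_j^-/\al_j^+}<1$ (resp.\ $e^{-\beta_j^+/\al_j^-}<1$), and the Neumann series for $(I-C_j)^{-1}$ on all of $BC_{per}(\R^2;\R)$ gives $\|(I-C_j)^{-1}\|\le\frac{1}{1-\|C_j\|}$ with exactly the stated constant. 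You should make this case split explicit: use the trace reduction only when $a_jb_{jj}<0$, and use the plain Neumann series when $a_jb_{jj}>0$.
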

\begin{proof} 
For all $j\le n$ and $(x,t) \in \R^2$ we have
\begin{eqnarray*}
-\frac{b_{jj}(x,t)}{a_j(x,t)}\le -\frac{\beta_j^-}{\al_j^+}<0 &\mbox{if} & \al_j^->0, \;\beta_j^->0,\\
-\frac{b_{jj}(x,t)}{a_j(x,t)}\le -\frac{\beta_j^+}{\al_j^-}<0 &\mbox{if} & \al_j^+<0, \;\beta_j^+<0,\\
0<-\frac{b_{jj}(x,t)}{a_j(x,t)}\le -\frac{\beta_j^-}{\al_j^-} &\mbox{if} & \al_j^->0, \;\beta_j^+<0,\\
0<-\frac{b_{jj}(x,t)}{a_j(x,t)}\le -\frac{\beta_j^+}{\al_j^+} &\mbox{if} & \al_j^+<0, \;\beta_j^->0.
\end{eqnarray*}
Hence,  for all  $j\le n$ and $\|u\|=1$ it holds
\begin{eqnarray}
\label{--a}
\left\|[Cu]_j\right\| \le e^{-\beta_j^-/\al_j^+}<1 &\mbox{if} & \al_j^->0, \;\beta_j^->0,\\
\label{++a}
\left\|[Cu]_j\right\| \le e^{-\beta_j^+/\al_j^-}<1 &\mbox{if} & \al_j^+<0, \;\beta_j^+<0,\\
\label{-+a}
\left\|[Cu]_j\right\| \le e^{-\beta_j^-/\al_j^-} &\mbox{if} & \al_j^->0, \;\beta_j^+<0,\\
\label{+-a}
\left\|[Cu]_j\right\| \le e^{-\beta_j^+/\al_j^+} &\mbox{if} & \al_j^+<0, \;\beta_j^->0.
\end{eqnarray}
Now the bounds  \reff{--} and \reff{++} easily follow from \reff{--a} and \reff{++a}, respectively.

To prove \reff{-+} and \reff{+-}, for an arbitrary fixed  $f \in BC_{per}(\R^2;\R^n)$ consider the equation
 $u=Cu+f$ with respect to $u \in BC_{per}(\R^2;\R^n)$. Then for all $j\le n$ and $ x,t \in \R$ we have \beq
\label{Ceq1}
u_j(x,t)=\exp\left(-\int_{0}^{x}\frac{b_{jj}(\eta,\tau_j(\eta,x,t))}{a_j(\eta,\tau_j(\eta,x,t))}d\eta\right)u_j(1,\tau_j(0,x,t))+f_j(x,t).
\ee
In particular,
\beq
\label{Ceq2}
u_j(1,t)=\exp\left(-\int_{0}^{1}\frac{b_{jj}(\eta,\tau_j(\eta,1,t))}{a_j(\eta,\tau_j(\eta,1,t))}d\eta\right)u_j(1,\tau_j(0,1,t))+f_j(1,t).
\ee
Introduce operators $\tilde{C}_j \in \LL(BC(\R))$ by  
$$
(\tilde{C}_jv)(t):=\exp\left(-\int_{0}^{1}\frac{b_{jj}(\eta,\tau_j(\eta,1,t))}{a_j(\eta,\tau_j(\eta,1,t))}d\eta\right)v(\tau_j(0,1,t)).
$$
We see at once that the operators $\tilde{C}_j$ are bijective and  
\begin{eqnarray}
\label{-+b}
\|\tilde{C}_j^{-1}\| \le e^{\beta_j^+/\al_j^+}<1 &\mbox{if} & \al_j^->0, \;\beta_j^+<0,\\
\label{+-b}
\|\tilde{C}_j^{-1}\| \le e^{\beta_j^-/\al_j^-}<1 &\mbox{if} & \al_j^+<0, \;\beta_j^->0.
\end{eqnarray}
Hence, \reff{Ceq2} is uniquely solvable with respect to $\tilde u(t)=u(1,t)$. Moreover, 
$$
\tilde u_j=-(I-\tilde{C}_j^{-1})^{-1}\tilde{C}_j^{-1}\tilde{f}_j,
$$
where 
$\tilde{f}_j=f_j(1,t)$. Inserting this  into  \reff{Ceq1} and letting $C_ju_j:=(Cu)_j$, we get
$$
u_j=-C_j(I-\tilde{C}_j^{-1})^{-1}\tilde{C}_j^{-1}\tilde{f}_j+f_j,
$$
what entails that
$$
\|(I-C_j)^{-1}\|\le \|C_j\| \|(I-\tilde{C}_j^{-1})^{-1}\| \|\tilde{C}_j^{-1}\|+1.
$$
Now, \reff{-+} and \reff{+-} follow from \reff{-+a}, \reff{+-a}, \reff{-+b} and  \reff{+-b}. 
\end{proof}

By Lemma \ref{lem:iso}, the bijectivity of  $I-C-D\in\LL(BC_{per}(\R^2;\R^n))$
is equivalent to the bijectivity of  $I-(I-C)^{-1}D\in\LL(BC_{per}(\R^2;\R^n))$.

\begin{lemma}\label{lem:Dest} Suppose that the conditions \reff{f5l} and \reff{betacond}
are fulfilled.  
Then
for all  $j\le n$ and $\|u\|=1$ it holds
\begin{eqnarray*}
\label{--c}
\Bigl\|\left[(I-C)^{-1}Du\right]_j\Bigr\| \le  \frac{\beta_j}{\beta_j^-}\frac{\al_j^+}{\al_j^-}  &\mbox{if} & \al_j^->0, \;\beta_j^->0,\\
\label{++c}
\left\|\left[(I-C)^{-1}Du\right]_j\right\| \le  -\frac{\beta_j}{\beta_j^+} \frac{\al_j^-\left(1-e^{-\beta_j^+/\al_j^+}\right)}{\al_j^+\left(1-e^{-\beta_j^+/\al_j^-}\right)}
&\mbox{if} & \al_j^+<0, \;\beta_j^+<0,\\
\label{-+c}
\left\|\left[(I-C)^{-1}Du\right]_j\right\| \le \frac{\beta_j}{\beta_j^-}
\frac{1-e^{-\beta_j^-/\al_j^-}}{1-e^{-\beta_j^+/\al_j^+}}
\left(e^{\beta_j^+/\al_j^+-\beta_j^-/\al_j^-}-e^{-\beta_j^+/\al_j^+}+1\right)
&\mbox{if} & \al_j^->0, \;\beta_j^+<0,\\
\label{+-c}
\left\|\left[(I-C)^{-1}Du\right]_j\right\| \le -\frac{\beta_j}{\beta_j^-}
\frac{1-e^{-\beta_j^+/\al_j^+}}{1-e^{-\beta_j^-/\al_j^-}}
\left(e^{\beta_j^-/\al_j^--\beta_j^+/\al_j^+}-e^{-\beta_j^-/\al_j^-}+1\right)
&\mbox{if} & \al_j^+<0, \;\beta_j^->0.
\end{eqnarray*}
\end{lemma}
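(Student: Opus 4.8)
The plan is to estimate $\|[(I-C)^{-1}Du]_j\|$ by composing the bound on $\|(I-C)^{-1}\|$ from Lemma \ref{lem:iso} with a bound on the operator norm of $D$ acting componentwise. First I would observe that $D$ is ``almost diagonal'' in the sense that $(Du)_j(x,t)$ depends on $u_k$ only through the off-diagonal coefficients $b_{jk}$, $k\neq j$, and these are controlled by $\beta_j=\sup_{x,t}\sum_{k\neq j}|b_{jk}(x,t)|$. Thus for $\|u\|=1$ we have the pointwise estimate
$$
\bigl|[Du]_j(x,t)\bigr|\le \beta_j\int_0^x |d_j(\xi,x,t)|\,d\xi,
$$
and the integral $\int_0^x |d_j(\xi,x,t)|\,d\xi$ is computed using \reff{f8}, namely $d_j=c_j/a_j$ with $c_j(\xi,x,t)=\exp(-\int_\xi^x \frac{b_{jj}}{a_j}\,d\eta)$, together with the sign information on $a_j$ and $b_{jj}$ encoded in $\al_j^\pm,\beta_j^\pm$. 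In each of the four sign regimes one substitutes the worst-case ratios $\beta_j^\mp/\al_j^\pm$ into the exponent and carries out an elementary integration to get an explicit bound on $\|[Du]_j\|$ of the form $\beta_j$ times an explicit constant.

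Next I would feed this into the resolvent. Since $D$ maps into the $j$-th component using only $u_k$ with $k\neq j$, but $(I-C)^{-1}$ acts diagonally in $j$ (each component $u_j$ of $(I-C)^{-1}u$ depends only on $u_j$, as is visible from the definition of $C$), we have $[(I-C)^{-1}Du]_j=(I-C)^{-1}_j[Du]_j$, where $(I-C)^{-1}_j$ is the scalar resolvent on $BC(\R)$ whose norm was bounded in \reff{--}--\reff{+-}. Hence
$$
\bigl\|[(I-C)^{-1}Du]_j\bigr\|\le \bigl\|(I-C)^{-1}_j\bigr\|\cdot\bigl\|[Du]_j\bigr\|,
$$
and multiplying the bound from Lemma \ref{lem:iso} by the bound on $\|[Du]_j\|$ obtained in the previous step yields, after simplification, exactly the four displayed inequalities. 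The simplification step is where the $1-e^{-\beta_j^-/\al_j^-}$ and $1-e^{-\beta_j^+/\al_j^+}$ factors from the $D$-integral cancel or recombine against the corresponding factors in the $(I-C)^{-1}$ bound to produce the stated form; this bookkeeping, regime by regime, is the bulk of the work.

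The main obstacle I anticipate is the case analysis for the integral $\int_0^x |d_j(\xi,x,t)|\,d\xi$ when $a_j$ and $b_{jj}$ have opposite signs (the regimes $\al_j^->0,\beta_j^+<0$ and $\al_j^+<0,\beta_j^->0$), since then $-b_{jj}/a_j>0$ and the exponential $c_j$ grows rather than decays along the integration interval, so the worst-case bound is not simply obtained by bounding the integrand by its supremum; one has to track both the value of $c_j$ at the endpoint and along the way, which is precisely why the mixed-sign bounds involve the extra $+1$ term and the difference of exponentials $e^{\beta_j^+/\al_j^+-\beta_j^-/\al_j^-}$. Getting the direction of all inequalities right here, and making sure the constants match the ones appearing in \reff{a--}--\reff{a+-} so that Corollary \ref{cor:1} can conclude $\|D\|<1/\|(I-C)^{-1}\|$, is the delicate part; everything else is routine one-variable calculus.
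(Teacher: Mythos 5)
Your proposal is correct and follows the same route as the paper: the pointwise bound $|[Du]_j|\le\beta_j\int_0^x|d_j(\xi,x,t)|\,d\xi$, the worst-case replacement of $b_{jj}/a_j$ by the appropriate ratio $\beta_j^\mp/\al_j^\pm$ followed by explicit integration over $\xi\in[0,x]$, and composition with the componentwise resolvent estimate via $[(I-C)^{-1}Du]_j=(I-C_j)^{-1}(Du)_j$ using Lemma \ref{lem:iso}. One small misattribution in your last paragraph: the ``$+1$'' terms and the $e^{\beta_j^+/\al_j^+-\beta_j^-/\al_j^-}$ factor in the mixed-sign bounds actually come from the $(I-C_j)^{-1}$ estimates \reff{-+} and \reff{+-} of Lemma \ref{lem:iso}, not from the $D$-integral --- even in those regimes the $D$-integral is evaluated directly to a simple closed form such as $\frac{\beta_j}{\beta_j^-}\bigl(1-e^{-\beta_j^-/\al_j^-}\bigr)$, with no endpoint-versus-interior bookkeeping required.
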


\begin{proof}
We see at once that
$$
|(Du)_j(x,t)|\le \beta
 \int_{0}^{x}\frac{1}{|a_j(\xi,\tau_j(\xi,x,t))|}
\exp\left(-\int_{\xi}^{x}\frac{b_{jj}(\eta,\tau_j(\eta,x,t))}{a_j(\eta,\tau_j(\eta,x,t))}d\eta\right)d\xi.
$$
Moreover, for $x \ge \xi$ it holds
\begin{eqnarray*}
\exp\left(-\int_{\xi}^{x}\frac{b_{jj}(\eta,\tau_j(\eta,x,t))}{a_j(\eta,\tau_j(\eta,x,t))}d\eta\right)\le 
 \exp\left(-\frac{\beta_j^-}{\al_j^+}(x-\xi)\right)
&\mbox{if} & \al_j^->0, \;\beta_j^->0,\\
\exp\left(-\int_{\xi}^{x}\frac{b_{jj}(\eta,\tau_j(\eta,x,t))}{a_j(\eta,\tau_j(\eta,x,t))}d\eta\right)\le 
 \exp\left(-\frac{\beta_j^+}{\al_j^-}(x-\xi)\right)
&\mbox{if} & \al_j^+<0, \;\beta_j^+<0,\\
\exp\left(-\int_{\xi}^{x}\frac{b_{jj}(\eta,\tau_j(\eta,x,t))}{a_j(\eta,\tau_j(\eta,x,t))}d\eta\right)\le 
 \exp\left(-\frac{\beta_j^-}{\al_j^-}(x-\xi)\right)
&\mbox{if} & \al_j^->0, \;\beta_j^+<0,\\
\exp\left(-\int_{\xi}^{x}\frac{b_{jj}(\eta,\tau_j(\eta,x,t))}{a_j(\eta,\tau_j(\eta,x,t))}d\eta\right)\le 
 \exp\left(-\frac{\beta_j^+}{\al_j^+}(x-\xi)\right)
&\mbox{if} & \al_j^+<0, \;\beta_j^->0.
\end{eqnarray*}
Therefore,
\begin{eqnarray*}
|(Du)_j(x,t)|\le \frac{\beta_j}{\al_j^-}\int_0^xe^{-\beta_j^-(x-\xi)/\al_j^+}d\xi
=\frac{\beta_j}{\beta_j^-}\frac{\al_j^+}{\al_j^-}\left(1-e^{-\beta_j^-/\al_j^+}\right)
\mbox{ if }  \al_j^->0, \;\beta_j^->0,\\
|(Du)_j(x,t)|\le -\frac{\beta_j}{\al_j^+}\int_0^xe^{-\beta_j^+(x-\xi)/\al_j^-}d\xi
=-\frac{\beta_j}{\beta_j^+}\frac{\al_j^-}{\al_j^+}\left(1-e^{-\beta_j^+/\al_j^+}\right)
\mbox{ if }  \al_j^+<0, \;\beta_j^+<0,\\
|(Du)_j(x,t)|\le \frac{\beta_j}{\al_j^-}\int_0^xe^{-\beta_j^-(x-\xi)/\al_j^-}d\xi
=\frac{\beta_j}{\beta_j^-}\left(1-e^{-\beta_j^-/\al_j^-}\right)
\mbox{ if }  \al_j^->0, \;\beta_j^+<0,\\
|(Du)_j(x,t)|\le -\frac{\beta_j}{\al_j^+}\int_0^xe^{-\beta_j^+(x-\xi)/\al_j^+}d\xi
=-\frac{\beta_j}{\beta_j^+}\left(1-e^{-\beta_j^+/\al_j^+}\right)
\mbox{ if }  \al_j^+<0, \;\beta_j^->0.
\end{eqnarray*}
Taking into account that $\left[(I-C)^{-1}Du\right]_j=(I-C_j)^{-1}(Du)_j$ and combining
the obtained bounds with  Lemma \ref{lem:iso}, we get the desired assertion.
\end{proof} 

\begin{cor}\label{cor:1}
Under the assumptions  of Theorem  \ref{thm:th3},
$$\|(I-C)^{-1}D\|<1.
$$
\end{cor}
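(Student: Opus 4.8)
The plan is to read the corollary off Lemma~\ref{lem:Dest} once the operator norm is unwound. The norm on $BC_{per}(\R^2;\R^n)$ is $\|v\|=\max_{j\le n}\sup_{(x,t)\in\R^2}|v_j(x,t)|$, so for any $u$ with $\|u\|=1$ we have $\|(I-C)^{-1}Du\|=\max_{j\le n}\|[(I-C)^{-1}Du]_j\|$, and hence
\[
\|(I-C)^{-1}D\|=\sup_{\|u\|=1}\max_{j\le n}\bigl\|\bigl[(I-C)^{-1}Du\bigr]_j\bigr\|
\ \le\ \max_{j\le n}\ \sup_{\|u\|=1}\bigl\|\bigl[(I-C)^{-1}Du\bigr]_j\bigr\|.
\]
Thus it suffices to show that, for each fixed $j\le n$, the quantity $\|[(I-C)^{-1}Du]_j\|$ is bounded, uniformly over $\|u\|=1$, by a constant strictly smaller than $1$; since there are only finitely many indices $j$, the maximum over $j$ is then strictly smaller than $1$ as well.

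So fix $j$. By \reff{f5l} we have either $\al_j^->0$ or $\al_j^+<0$, and by \reff{betacond} together with the continuity of $b_{jj}$ on the connected set $\R^2$ the coefficient $b_{jj}$ has constant sign, so either $\beta_j^->0$ or $\beta_j^+<0$. Thus exactly one of the four cases of Lemma~\ref{lem:Dest} applies to $j$, and that lemma supplies the corresponding explicit upper bound for $\|[(I-C)^{-1}Du]_j\|$ valid for all $\|u\|=1$. It remains to check, case by case, that the requirement that this upper bound be $<1$ is, after clearing denominators, precisely the corresponding inequality among \reff{a--}--\reff{a+-} of Theorem~\ref{thm:th3}, which is assumed to hold. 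For instance, in the case $\al_j^->0$, $\beta_j^->0$ the bound is $\frac{\beta_j}{\beta_j^-}\frac{\al_j^+}{\al_j^-}$, which is $<1$ if and only if $\beta_j<\beta_j^-\al_j^-/\al_j^+$, i.e.\ \reff{a--}; the other three cases reduce to \reff{a++}, \reff{a-+} and \reff{a+-} by the same elementary manipulation.

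I expect the only point needing care to be the sign bookkeeping in these four rearrangements: in the case $\al_j^+<0$, $\beta_j^+<0$ all of $\al_j^\pm$ and $\beta_j^\pm$ are negative; in the case $\al_j^->0$, $\beta_j^+<0$ one has $\beta_j^-\le\beta_j^+<0<\al_j^-\le\al_j^+$; and in the case $\al_j^+<0$, $\beta_j^->0$ one has $\al_j^-\le\al_j^+<0<\beta_j^-\le\beta_j^+$. When multiplying the inequality ``upper bound $<1$'' through by the various exponential and coefficient factors one must therefore keep track of which factors are positive and which are negative in order to land exactly on \reff{a++}, \reff{a-+} and \reff{a+-}, respectively. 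Beyond this purely computational bookkeeping no obstacle arises: the corollary follows at once from Lemma~\ref{lem:Dest} and the hypotheses of Theorem~\ref{thm:th3}.
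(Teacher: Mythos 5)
Your proposal is correct and is precisely the (unstated) argument the paper intends: the corollary is read off from Lemma~\ref{lem:Dest} by observing that each of the four per-component bounds is strictly below $1$ exactly when the matching inequality among \reff{a--}--\reff{a+-} holds, and then taking the maximum over the finitely many $j$. One caveat worth knowing before you carry out the ``same elementary manipulation'' for cases three and four: the bounds as printed in Lemma~\ref{lem:Dest} contain a sign typo in the exponents (the factors $1-e^{-\beta_j^+/\al_j^+}$ and $e^{-\beta_j^+/\al_j^+}$ should read $1-e^{\beta_j^+/\al_j^+}$ and $e^{\beta_j^+/\al_j^+}$ in the third case, and symmetrically in the fourth), as one sees by multiplying the $D$-estimate from the proof of Lemma~\ref{lem:Dest} with the $(I-C)^{-1}$-bound \reff{-+} of Lemma~\ref{lem:iso}; with the corrected bounds, the condition ``bound $<1$'' is literally \reff{a-+} (resp.\ \reff{a+-}), so the corollary holds as you describe.
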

Consequently, the operator $I-C-D\in\LL(BC_{per}(\R^2;\R^n))$ is bijective, what completes the proof 
of Theorem  \ref{thm:th3}.

\subsection{Proof of Theorem \ref{thm:th2}}\label{sec:proofth2}

On the account of Lemma \ref{lem:31}, we have to prove the bijectivity of 
the operator $I-C-D\in\LL(BC_{per}(\R^2;\R^n))$.
This will follow from Theorems \ref{thm:th12} and \ref{thm:thuni} below.

\subsubsection{Fredholm alternative}\label{sec:fred}

Here we prove that the operator $I-C-D: BC_{per}(\R^2;\R^n)\to BC_{per}(\R^2;\R^n)$ 
is Fredholm of index zero.

\begin{thm}\label{thm:th12}
Suppose that the conditions \reff{f5l}, \reff{betacond}, \reff{f143l} and \reff{f9l}
are fulfilled. 
Let $\mathcal{K}$ denote the vector space of all bounded continuous solutions to \reff{f10a} with 
$f\equiv0$. Then

$(i)$ $\dim \mathcal{K}<\infty$ and the vector space of all $f\in BC_{per}(\R^2;\R^n)$ such that there exists a bounded continuous solution to \reff{f10a} is 
a closed subspace of codimension $\dim \mathcal{K}$ in $BC_{per}(\R^2;\R^n)$.

$(ii)$ If $\dim \mathcal{K}=0$, then for any $f\in BC_{per}(\R^2;\R^n)$ there exists a unique 
bounded continuous solution $u$ to \reff{f10a}.
\end{thm}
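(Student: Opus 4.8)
The plan is to prove that $I-C-D$ is Fredholm of index zero by showing that $C+D$ is, in an appropriate sense, a ``small'' perturbation modulo a compact operator. The key structural observation is condition \reff{f9l}: writing $b_{jk}=\tilde b_{jk}(a_j-a_k)$ allows one to split the off-diagonal part of $D$ into two pieces. The difference $a_j-a_k$ vanishes precisely where the $j$-th and $k$-th characteristics become tangent, and dividing it by $a_j$ produces a smooth factor; this is the mechanism by which the singular (non-compact) behaviour of the integral operator $D$ is controlled. So first I would decompose $D=D_0+D_1$, where $D_0$ collects the contribution that is genuinely compact (because of the smoothing $\tilde b_{jk}$ together with the fact that integration along characteristics gains regularity in the transversal direction), and $D_1$ collects a remainder whose norm can be made small.

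Second, I would use condition \reff{f143l}: for large $|t|$ the off-diagonal coefficients $b_{jk}$ are uniformly small, so on the complement of a time strip $[-c,c]$ the operator $D$ has small norm. Combined with the hyperbolicity assumption \reff{k0} or \reff{k00} (which via Lemma \ref{lem:iso} makes $I-C$ boundedly invertible with a controlled inverse), this means that outside a compact time interval the operator $I-C-D$ is invertible by a Neumann series. The standard ``cut-off and patch'' argument for Fredholm operators on $BC(\R;\cdot)$ then applies: one writes $I-C-D = (I-C-D_{\mathrm{far}}) - D_{\mathrm{near}}$, where $D_{\mathrm{near}}$ is supported in $t\in[-c,c]$; the first factor is invertible, and $D_{\mathrm{near}}$ restricted to the finite strip is compact (here again one needs the smoothing from \reff{f9l}, plus Arzel\`a--Ascoli on the compact $x$-interval $[0,1]$ and the bounded $t$-interval, noting periodicity in $x$). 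This yields part $(i)$: $\dim\K<\infty$ and the range is closed of codimension $\dim\K$. Index zero follows either from homotopy invariance — deform all $b_{jk}$ with $j\neq k$ continuously to zero, along which $I-C-D$ stays Fredholm by the same estimates and at the endpoint equals $I-C$, which is invertible hence of index zero — or directly from the structure of the construction.

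Third, part $(ii)$ is immediate once $(i)$ is in hand: Fredholm of index zero plus injectivity ($\dim\K=0$) forces surjectivity, so $I-C-D$ is bijective, and by the open mapping theorem $(I-C-D)^{-1}$ is bounded; hence \reff{f10a} has a unique bounded continuous solution for every $f$.

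The main obstacle I anticipate is the compactness step: operators of the form $(Du)_j(x,t)=-\int_0^x d_j(\xi,x,t)\sum_{k\neq j}b_{jk}(\xi,\tau_j(\xi,x,t))u_k(\xi,\tau_j(\xi,x,t))\,d\xi$ are \emph{not} compact on $BC_{per}(\R^2;\R^n)$ without exploiting \reff{f9l}, because evaluation of $u_k$ along the characteristic $\tau_j$ does not by itself gain compactness (the characteristics of different components are generically transversal, so there is no loss of a variable). The factorization $b_{jk}=\tilde b_{jk}(a_j-a_k)$ is exactly what is needed: substituting $\d_\xi\tau_j = 1/a_j$ and $\d_\xi\tau_k = 1/a_k$ shows that $a_j-a_k$ times the derivative data along $\tau_j$ can be rewritten, after an integration by parts in $\xi$, as an operator whose kernel is continuous (no remaining characteristic-evaluation inside the integral), and such integral operators on a compact domain are compact. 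Getting this integration by parts clean — handling the boundary terms at $\xi=0$ and $\xi=x$ and checking that the resulting kernel is jointly continuous and periodic — is where the real work lies, and I expect it to occupy the bulk of the proof in Section \ref{sec:fred}.
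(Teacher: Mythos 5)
Your compactness step has a genuine gap. You propose to write $D=D_0+D_1$ with $D_0$ compact and $D_1$ small, and independently to cut $D$ off to a time strip $t\in[-c,c]$ and declare the near piece compact. Neither works, because a \emph{single} application of $D$ cannot be smoothed by the integration-by-parts mechanism that \reff{f9l} enables, regardless of the domain. Inside $(Du)_j$ one has $u_k(\xi,\tau_j(\xi,x,t))$, and $\frac{d}{d\xi}u_k(\xi,\tau_j(\xi))=\d_1u_k(\xi,\tau_j(\xi))+\frac{1}{a_j}\d_2u_k(\xi,\tau_j(\xi))$ mixes both partial derivatives with only the single coefficient $1/a_j$; the decisive difference $1/a_j-1/a_k$, which the factorization $b_{jk}=\tilde b_{jk}(a_j-a_k)$ is designed to cancel, never appears. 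That difference emerges only after \emph{composing two} characteristic flows, i.e.\ after applying $D$ twice: in the Fubini representation \reff{f311} of $(D^2u)_j$, the inner integrand contains $u_l(\eta,\tau_k(\eta,\xi,\tau_j(\xi,x,t)))$, where $\xi$ enters only through the second argument, and $\frac{d}{d\xi}$ produces exactly the factor $\left(\frac{1}{a_j}-\frac{1}{a_k}\right)\d_t\tau_k\,\d_2u_l$ (equation \reff{eqwn}), which \reff{f9l} then cancels. Compactness is thus a property of $D^2$ and $DC$, not of $D$ or of any piece of $D$, and this is why your decomposition cannot be made to work.

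Accordingly the paper does not decompose $D$; it invokes Nikolsky's criterion (an operator $I+K$ is Fredholm of index zero whenever $K^2$ is compact), applies it to $K=(I-C)^{-1}D$ with $(I-C)^{-1}$ bounded from Lemma \ref{lem:iso}, and reduces to compactness of $D^2$ and $DC$, established via the generalized Arzel\`a--Ascoli Theorem \ref{thm:th21}: $C^1$-estimates on bounded strips (using \reff{f9l} and integration by parts) plus the tail control coming from \reff{f143l}. Note in particular that \reff{f143l} enters the Fredholm argument only through the tail estimate of equicontinuity, not to invert a "far" piece of the operator as you propose; that far/near inversion idea is, in effect, what the paper uses for \emph{uniqueness} in Theorem \ref{thm:thuni}, not for Fredholmness. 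Your homotopy argument for index zero is a plausible fallback but is superfluous once Nikolsky's criterion is in play, and your part $(ii)$ (injective plus Fredholm of index zero implies bijective) is correct.
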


The proof extends the ideas of  \cite{KRKI,KR3}, where the 
Fredholm alternative is proved for 
time-periodic solutions to boundary value hyperbolic problems. 

 One of the technical tools we intend to employ is a generalized Arzela-Ascoli compactness criteria for unbounded domains, see \cite{Lev}. To formulate it, we need a corresponding notion of equicontinuity.
\begin{defn}
A family $\Phi\subset BC_{per}(\R^2;\R^n)$ is called equicontinuous on $[0,1]\times\R$ if
\begin{itemize}
\item $\Phi$ is equicontinuous on any compact set in $[0,1]\times\R$, and 
\item for any $\varepsilon>0$ there exists $T>0$ such that \begin{equation}\label{qwq}
|u(x^{\prime},t^{\prime})-u(x^{\prime\prime},t^{\prime\prime})|<\varepsilon
\end{equation}
for all $x^{\prime}, x^{\prime\prime}\in [0,1]$, all $t^{\prime}, t^{\prime\prime}\in\R\setminus[-T,T]$, and all $u\in\Phi$.
\end{itemize}
\end{defn}

\begin{thm}\label{thm:th21} \rm (a generalized Arzela-Ascoli theorem)\it 
A family $\Phi\subset BC_{per}(\R^2;\R^n)$ is precompact in $BC_{per}(\R^2;\R^n)$ if and only if 
$\Phi$ is bounded in $BC_{per}(\R^2;\R^n)$ and equicontinuous on $[0,1]\times\R$.
\end{thm}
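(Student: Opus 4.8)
\emph{Proof proposal.} The plan is to prove both implications, the reverse one (boundedness and equicontinuity imply precompactness) being the substantive direction, since that is what is used later. The forward implication is the classical converse of the Arzela-Ascoli theorem and I would only sketch it: a precompact set is totally bounded, hence bounded; and given $\varepsilon>0$, a finite $(\varepsilon/3)$-net $v_1,\dots,v_M\in\Phi$ reduces both requirements in the definition of equicontinuity on $[0,1]\times\R$ to the corresponding properties of the finitely many $v_k$ --- uniform continuity on each compact set for the first bullet, and smallness of the oscillation of $v_k$ for large $|t|$ for the second --- via $|u(p)-u(q)|\le|u(p)-v_k(p)|+|v_k(p)-v_k(q)|+|v_k(q)-u(q)|$, where $v_k$ is chosen $(\varepsilon/3)$-close to $u$.

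For the reverse implication, assume $\Phi$ is bounded and equicontinuous on $[0,1]\times\R$. Since $BC_{per}(\R^2;\R^n)$ is complete, it suffices to show that $\Phi$ is totally bounded. Fix $\varepsilon>0$. By the second bullet of the definition there is $T>0$ with
\begin{equation}\label{plan-tail}
|u(x',t')-u(x'',t'')|<\frac{\varepsilon}{4}\qquad\mbox{for all } x',x''\in[0,1],\ t',t''\in\R\setminus[-T,T],\ u\in\Phi ,
\end{equation}
and by continuity of each $u\in\Phi$ the estimate \reff{plan-tail} remains valid when $t'$ or $t''$ is replaced by $\pm T$. On the compact rectangle $K:=[0,1]\times[-T,T]$ the restricted family $\{u|_K:u\in\Phi\}$ is bounded in $C(K;\R^n)$ and equicontinuous (first bullet), so by the classical Arzela-Ascoli theorem it is precompact in $C(K;\R^n)$; choose $u_1,\dots,u_N\in\Phi$ so that every $u\in\Phi$ satisfies $\sup_K|u-u_i|<\varepsilon/4$ for some $i\le N$.

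I then claim that $\{u_1,\dots,u_N\}$ is an $\varepsilon$-net for $\Phi$ in the norm of $BC_{per}(\R^2;\R^n)$. Fix $u\in\Phi$ and the corresponding index $i$; by $1$-periodicity in $x$ it is enough to bound $|u(x,t)-u_i(x,t)|$ for $x\in[0,1]$ and $t\in\R$. If $|t|\le T$ this is $<\varepsilon/4$ by the choice of $u_i$. If $t>T$, put $\hat t:=T$ and write
\begin{equation}\label{plan-glue}
|u(x,t)-u_i(x,t)|\le |u(x,t)-u(x,\hat t)|+|u(x,\hat t)-u_i(x,\hat t)|+|u_i(x,\hat t)-u_i(x,t)|<\frac{\varepsilon}{4}+\frac{\varepsilon}{4}+\frac{\varepsilon}{4},
\end{equation}
where the two outer terms are controlled by \reff{plan-tail} (applied to $u$ and to $u_i$, with $\{t,\hat t\}$ in the roles of $t',t''$) and the middle one by the choice of $u_i$; the case $t<-T$ is symmetric with $\hat t:=-T$. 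Hence $\|u-u_i\|_\infty\le 3\varepsilon/4<\varepsilon$, so $\Phi$ is totally bounded, and therefore precompact in the Banach space $BC_{per}(\R^2;\R^n)$.

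The one point that needs care is precisely the passage in \reff{plan-glue} from a finite net on the compact rectangle $K$ to a net on the whole strip $[0,1]\times\R$: this is where the tail condition is indispensable, and one must check that it may be invoked up to and including the boundary times $t=\pm T$ and uniformly over all members of $\Phi$, which is the reason for routing the estimate through the auxiliary point $\hat t=\pm T$. All other steps are the standard Arzela-Ascoli argument.
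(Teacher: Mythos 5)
The paper does not prove Theorem~\ref{thm:th21}; it only cites \cite{Lev} and uses the result as a black box, so there is no proof in the paper to compare against. Your argument for the direction the paper actually needs --- boundedness together with equicontinuity on $[0,1]\times\R$ implies precompactness --- is correct and cleanly organized: applying the classical Arzela--Ascoli theorem on $K=[0,1]\times[-T,T]$ to extract a finite $\varepsilon/4$-net $u_1,\dots,u_N\in\Phi$, then propagating it to $|t|>T$ through the auxiliary time $\hat t=\pm T$ and the tail estimate, is exactly the right way to manufacture a finite $\varepsilon$-net on the whole strip, and your remark that the open-tail bound extends to $t''=\pm T$ by continuity of each $u\in\Phi$ closes the only delicate point.

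The other implication, which you only sketch, is where the theorem as stated in the paper actually fails, and your sketch does not detect the problem. You propose to transfer the tail condition \reff{qwq} from $\Phi$ to finitely many net functions $v_k\in\Phi$ via the triangle inequality, relying on the oscillation of each $v_k$ being small for large $|t|$. But an individual element of $BC_{per}(\R^2;\R^n)$ need not have any such decay: $v(x,t)=\sin t$ lies in the space, its oscillation on $\R\setminus[-T,T]$ equals $2$ for every $T$, and yet the singleton $\{v\}$ is trivially precompact. Hence precompactness does not imply the second bullet of the equicontinuity definition, and the $\varepsilon/3$-net reduction cannot repair this, since the target property already fails for a single function. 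The paper only ever invokes the substantive direction, when proving \reff{plusone} and \reff{plustwo}, so nothing downstream is affected; but a self-contained proof should either confine itself to that direction, or be stated under hypotheses (for instance, functions with limits at $\pm\infty$, or the almost-periodic setting of \cite{Lev}) in which the tail condition really is necessary.
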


\begin{proof}{\bf\ref{thm:th12}.}
 Lemma \ref{lem:iso} states that the operator $I-C:BC_{per}(\R^2;\R^n)\to BC_{per}(\R^2;\R^n)$ is bijective.
Then the operator $I-C-D$ is Fredholm of index zero if and only if
\beq \label{f37}
I-(I-C)^{-1}D:BC_{per}(\R^2;\R^n)\to BC_{per}(\R^2;\R^n) \;\textrm{is Fredholm of index zero.}
\ee
Nikolsky's criterion \cite[Theorem XIII.5.2]{KA} says that an operator $I+K$ on a Banach space is Fredholm of index zero whenever $K^2$ is compact. Hence, we are done with \reff{f37} if we
 show that the operator $[(I-C)^{-1}D]^2: BC_{per}(\R^2;\R^n)\to BC_{per}(\R^2;\R^n)$ 
 is compact. As the composition of a compact and a bounded operator is a compact operator, it is enough to show that  
$$
D(I-C)^{-1}D:BC_{per}(\R^2;\R^n)\to BC_{per}(\R^2;\R^n) \;\textrm{is compact.}
$$
Since $D(I-C)^{-1}D=D^2+DC(I-C)^{-1}D$ and $(I-C)^{-1}D$ is bounded, it is sufficient to prove that
\beq \label{f38}
D^2, DC:BC_{per}(\R^2;\R^n)\to BC_{per}(\R^2;\R^n)\;\textrm{are compact.}
\ee

To show \reff{f38}, we use Theorem \ref{thm:th21}.
Given $T>0$, set $\Pi(T)=\{(x,t)\in\R^2\,:\,0\le x\le 1,-T\le t\le T\}$. Fix an arbitrary bounded set $Y\subset BC_{per}(\R^2;\R^n)$. For  \reff{f38} it is sufficient to prove the following two statements:
\begin{equation}\label{plusone}
D^2Y \;\textrm{and}\; DCY\;\textrm{are equicontinuous on } \Pi(T)
\mbox{ for an arbitrary fixed } T>0
\end{equation}
and
\begin{equation}\label{plustwo}
\begin{array}{cc}
\textrm{given}\; \varepsilon>0, \;\textrm{there exists}\; T>0 \;\textrm{such that \reff{qwq}}\; \textrm{is fulfilled for all}\; &  \\ 
x^{\prime}, x^{\prime\prime}\in[0,1],\;\;\; t^{\prime}, t^{\prime\prime}\in\R\setminus[-T,T], \;\;\; u\in D^2Y \;\;\;\textrm{and}\;\;\; u\in DCY. & 
\end{array}
\end{equation}

Let us start with  \reff{plusone}. Denote by $C(\Pi(T))$ 
(respectively, $C^1(\Pi(T))$) the Banach space of continuous (respectively,
 continuously differentiable) vector functions $u$ on $\Pi(T)$ such that $u(0,t)=u(1,t)$.
As $C^{1}(\Pi(T))$ is compactly embedded into $C(\Pi(T))$ 
(due to the Arzela-Ascoli theorem), it is sufficient to show that
\beq \label{f39}
\left\|D^2u|_{\Pi(T)}\right\|_{C^{1}(\Pi(T))}+
\left\|DCu|_{\Pi(T)}\right\|_{C^{1}(\Pi(T))}=O\left(\|u\|
\right) \;\textrm{ for all }\;
 u\in Y.
\ee
It should be noted that  for all  sufficiently large~$T$ the functions 
$D^2u$ and $DCu$ restricted to $\Pi(T)$
depend  only on $u$ 
restricted to $\Pi(2T)$.

We will use 
the following formulas
\beq \label{f22}
 \partial_x\tau_{j}(\xi)=
 -\frac{1}{a_j(x,t)}\exp{{\int_{\xi}^{x}\left (\frac{\partial_2a_j}{a_j^2}\right )(\eta,\tau_j(\eta)) d\eta}},
 \ee
 \beq \label{f23}
 \partial_t\tau_{j}(\xi)=
 \exp{{\int_{\xi}^{x}\left (\frac{\partial_2a_j}{a_j^2}\right )(\eta,\tau_j(\eta)) d\eta}},
 \ee
 being true for all $j\le n$, all $\xi,x\in[0,1]$, and all $t\in\R$.
Here and below by $\d_i$ we denote the partial derivative with respect to the $i$-th argument.
Then for all sufficiently large $T>0$ the partial derivatives 
$\partial_xD^2u$, $\partial_{t}D^2u$, $\partial_{x}DCu$, and $\partial_{t}DCu$  on $\Pi(T)$ 
exist and are 
continuous for all $u\in C^{1}(\Pi(2T))$.  Since $C^{1}(\Pi(2T))$ is dense in $C(\Pi(2T))$, 
then the desired property \reff{f39} will follow from the  bound
\beq \label{f310}
\left\|D^2u|_{\Pi(T)}\right\|_{C^1(\Pi(T))}
+\left\|DCu|_{\Pi(T)}\right\|_{C^1(\Pi(T))}
= O(\|u\|_{C(\Pi(2T))}) \;\textrm{for all}\; u\in C^1(\Pi(2T)).
\ee
This bound is proved similarly to \cite[Lemma 4.2]{KR3}:

We start with the estimate
$$
\left\|D^2u|_{\Pi(T)}\right\|_{C^1(\Pi(T))}=O(\|u\|_{C(\Pi(2T))}) \;\textrm{for all}\; u\in 
C^1(\Pi(2T)).
$$
Given $j\le n$ and $u\in C^{1}(\Pi(2T))$, let us consider the following representation for $(D^2u)_j(x,t)$ obtained after the application of the Fubini's theorem:
\beq \label{f311}
(D^2u)_j(x,t)
=\sum_{k\neq j}\sum_{l\neq k}\int_{0}^x\int_{\eta}^{x} d_{jkl}(\xi,\eta,x,t)b_{jk}(\xi,\tau_j(\xi))u_l(\eta,\tau_k(\eta,\xi,\tau_j(\xi))) d\xi d\eta,
\ee
where 
\beq \label{f311d}
d_{jkl}(\xi,\eta,x,t)=d_j(\xi,x,t)d_{k}(\eta,\xi,\tau_{j}(\xi))b_{kl}(\eta,\tau_{k}(\eta,\xi,\tau_j(\xi))).
\ee
Since 
$$
(\d_t+a_j(x,t)\d_x)\varphi(\tau_j(\xi,x,t))=0
$$
for all $j\le n, \varphi\in C^1(\R), x,\xi\in[0,1]$, and $t\in\R$, one can easily check that
$$
\|[(\d_t+a_j(x,t)\d_x)(D^2u)_j|_{\Pi(T)}]\|_{C(\Pi(T))} = O\left(\|u\|_{C(\Pi(2T))}\right)
\mbox{ for all } j\le n \mbox{ and } u\in C^1(\Pi(2T)).
$$
Hence the estimate 
$\left\|\d_xD^2u|_{\Pi(T)}\right\|_{C(\Pi(T))}= O(\|u\|_{C(\Pi(2T))})$ will follow from the following one:
\beq \label{f31jhr1}
\|\d_tD^2u|_{\Pi(T)}\|_{C(\Pi(T))}= O(\|u\|_{C(\Pi(2T))}).
\ee

We are therefore reduced to prove \reff{f31jhr1}. To this end, we start with the following consequence of \reff{f311}:
\begin{eqnarray*}
\lefteqn{
\d_t[(D^2u)_j(x,t)]}
\nonumber\\ &&
=\displaystyle\sum_{k\neq j}\sum_{l\neq k}\int_{0}^x\int_{\eta}^{x} \frac{d}{dt}\Bigl[ d_{jkl}(\xi,\eta,x,t)b_{jk}(\xi,\tau_j(\xi))\Bigr] u_l(\eta,\tau_k(\eta,\xi,\tau_j(\xi))) d\xi d\eta
\nonumber\\ &&
+\displaystyle\sum_{k\neq j}\sum_{l\neq k}\int_{0}^x\int_{\eta}^{x} d_{jkl}(\xi,\eta,x,t) b_{jk}(\xi,\tau_j(\xi))
\nonumber\\ &&
\times
\d_t\tau_k(\eta,\xi,\tau_j(\xi))\d_t\tau_j(\xi)\d_2u_l(\eta,\tau_k(\eta,\xi,\tau_j(\xi))) d\xi d\eta. \label{f312}
\end{eqnarray*}
Let us transform the second summand. Using \reff{f7l}, \reff{f22}, and \reff{f23}, we get
\begin{eqnarray}
\lefteqn{
\frac{d}{d\xi} u_l(\eta,\tau_k(\eta,\xi,\tau_j(\xi)))} \nonumber \\ &&
=\Bigl[\d_x\tau_k(\eta,\xi,\tau_j(\xi))+\d_t\tau_k(\eta,\xi,\tau_j(\xi))\d_{\xi}\tau_j(\xi)\Bigr] \d_2u_l(\eta,\tau_k(\eta,\xi,\tau_j(\xi))) \label{eqwn}
\\ &&
=\left ( \frac{1}{a_j(\xi,\tau_j(\xi))}-\frac{1}{a_k(\xi,\tau_j(\xi))}\right ) \d_t\tau_k(\eta,\xi,\tau_j(\xi))\d_2u_l(\eta,\tau_k(\eta,\xi,\tau_j(\xi))). \nonumber
\end{eqnarray}
Therefore, 
\begin{eqnarray}
\lefteqn{ 
 b_{jk}(\xi,\tau_j(\xi))\d_t\tau_k(\eta,\xi,\tau_j(\xi))\d_2u_l(\eta,\tau_k(\eta,\xi,\tau_j(\xi)))}
\nonumber \\ &&
 =\displaystyle a_j(\xi,\tau_j(\xi))a_k(\xi,\tau_j(\xi))\tilde{b}_{jk}(\xi,\tau_j(\xi))\frac{d}{d\xi} u_l(\eta,\tau_k(\eta,\xi,\tau_j(\xi))), \label{f313}
\end{eqnarray}
where the functions $\tilde{b}_{jk}\in BC_{per}(\R^2;\R)$ are fixed to satisfy \reff{f9l}. Note that $\tilde{b}_{jk}$ are not uniquely defined by \reff{f9l} for $(x,t)$ with $a_{j}(x,t)=a_{k}(x,t)$. Nevertheless, as it follows from \reff{eqwn}, the right-hand side (and, hence, the left-hand side of \reff{f313}) do not depend on the choice of $\tilde{b}_{jk}$, since $\frac{d}{d\xi}u_{l}(\eta,\tau_{k}(\eta,\xi,\tau_{j}(\xi)))=0$ if $a_{j}(x,t)=a_{k}(x,t)$. 

Write
$$
\tilde{d}_{jkl}(\xi,\eta,x,t)
=d_{jkl}(\xi,\eta,x,t)\d_t\tau_j(\xi)a_k(\xi,\tau_j(\xi))a_j(\xi,\tau_j(\xi))\tilde{b}_{jk}(\xi,\tau_j(\xi)),
$$
where $d_{jkl}$ are introduced by \reff{f311d} and \reff{f8}. Using  \reff{f7l} and \reff{f22}, 
we see that the function $\tilde{d}_{jkl}(\xi,\eta,x,t)$ is $C^1$-smooth
 in $\xi$ due to the regularity assumption 
\reff{f9l}. Similarly,
using \reff{f23}, we see that the functions $d_{jkl}(\xi,\eta,x,t)$ and $b_{jk}(\xi,\tau_j(\xi))$ are $C^1$-smooth in $t$.

By \reff{f313} we have
\begin{eqnarray}
\lefteqn{
(\d_tD^2u)_j(x,t)}\nonumber\\ &&
= \displaystyle \sum_{k\neq j}\sum_{l\neq k}\int_{0}^x\int_{\eta}^{x} \frac{d}{dt} [d_{jkl}(\xi,\eta,x,t)b_{jk}(\xi,\tau_j(\xi))] u_l(\eta,\tau_k(\eta,\xi,\tau_j(\xi))) d\xi d\eta
\nonumber\\ &&
+\displaystyle \sum_{k\neq j}\sum_{l\neq k}\int_{0}^x\int_{\eta}^{x}\tilde{d}_{jkl}(\xi,\eta,x,t)\frac{d}{d\xi} u_l(\eta,\tau_k(\eta,\xi,\tau_j(\xi))) d\xi d\eta
\nonumber\\ &&
=\displaystyle \sum_{k\neq j}\sum_{l\neq k}\int_{0}^x\int_{\eta}^{x} \frac{d}{dt} [d_{jkl}(\xi,\eta,x,t)b_{jk}(\xi,\tau_j(\xi))] u_l(\eta,\tau_k(\eta,\xi,\tau_j(\xi))) d\xi d\eta
\nonumber\\ &&
-\displaystyle \sum_{k\neq j}\sum_{l\neq k}\int_{0}^x\int_{\eta}^{x}\d_{\xi}\tilde{d}_{jkl}(\xi,\eta,x,t)u_l(\eta,\tau_k(\eta,\xi,\tau_j(\xi))) d\xi d\eta
\nonumber\\ &&
+\displaystyle \sum_{k\neq j}\sum_{l\neq k}\int_{0}^x\left [\tilde{d}_{jkl}(\xi,\eta,x,t) u_l(\eta,\tau_k(\eta,\xi,\tau_j(\xi)))\right ]_{\xi=\eta}^{\xi=x} d\eta.
\label{k5}
\end{eqnarray}
The desired estimate \reff{f31jhr1} now easily follows from the assumptions 
\reff{f5l}, \reff{betacond} and \reff{f9l} and the equations \reff{f311} and \reff{k5}.

To finish with \reff{f39}, it remains to show that 
\beq \label{f314}
\|\d_tDCu|_{\Pi(T)}\|_{C(\Pi(T))}= O(\|u\|_{C(\Pi(2T))}) \;\textrm{for all} \; u\in C^1(\Pi(2T)), 
\ee
as the estimate for $\d_xDCu$ is obtained similarly to the case of $\d_xD^2u$. In order to prove \reff{f314}, we consider an arbitrary integral contributing into $DCu$, namely
\beq \label{f315}
\int_{0}^{x} e_{jk}(\xi,x,t)b_{jk}(\xi,\tau_j(\xi))u_k(1,\tau_k(0,\xi,\tau_j(\xi))) d\xi,
\ee
where 
$$
e_{jk}(\xi,x,t)=d_j(\xi,x,t)c_k(0,\xi,\tau_j(\xi))
$$
and $j\le n$ and $k\le n$ are arbitrary fixed. Differentiating \reff{f315} in $t$, we get
\begin{eqnarray}
\lefteqn{
\displaystyle \int_{0}^{x} \frac{d}{dt}\Bigl[e_{jk}(\xi,x,t)b_{jk}(\xi,\tau_j(\xi))\Bigr]u_{k}(1,\tau_k(0,\xi,\tau_j(\xi))) d\xi}
\label{dtDC}
\\ &&
\displaystyle \int_{0}^{x} e_{jk}(\xi,x,t)b_{jk}(\xi,\tau_j(\xi))
\d_t\tau_k(0,\xi,\tau_j(\xi))\d_t\tau_j(\xi)\d_{2}u_k(1,\tau_k(0,\xi,\tau_j(\xi))) d\xi.\nonumber
\end{eqnarray}
Let us estimate the second integral; for the first one the desired estimate is obvious. Similarly to the above, we use \reff{f7l}, \reff{f22}, and \reff{f23} to obtain 
\begin{eqnarray*}
\lefteqn{
\frac{d}{d\xi}u_k(1,\tau_k(0,\xi,\tau_j(\xi)))}
\\ &&
=\Bigl[\d_x\tau_k(0,\xi,\tau_j(\xi))+\d_t\tau_k(0,\xi,\tau_j(\xi))\d_{\xi}\tau_j(\xi)\Bigr]\d_{2}u_{k}(1,\omega_k(0,\xi,\tau_j(\xi)))
\\ &&
=\left ( \frac{1}{a_j(\xi,\tau_j(\xi))}-\frac{1}{a_k(\xi,\tau_j(\xi))}\right ) \d_t\tau_k(0,\xi,\tau_j(\xi))\d_{2}u_{k}(1,\tau_k(0,\xi,\tau_j(\xi))).
\end{eqnarray*}
Taking into account \reff{f9l}, the last expression reads
\begin{eqnarray}
\lefteqn{ 
 b_{jk}(\xi,\tau_j(\xi))\d_t\tau_k(0,\xi,\tau_j(\xi))\d_{2}u_{k}(1,\tau_k(0,\xi,\tau_j(\xi)))}
\nonumber \\ &&
 =\displaystyle a_j(\xi,\tau_j(\xi))a_k(\xi,\tau_j(\xi))\tilde{b}_{jk}(\xi,\tau_j(\xi))\frac{d}{d\xi}u_{k}(1,\tau_k(0,\xi,\tau_j(\xi))).\label{f313009}
\end{eqnarray}
Set
$$
\tilde{e}_{jk}(\xi,x,t)
=e_{jk}(\xi,x,t)\d_t\tau_j(\xi)a_k(\xi,\tau_j(\xi))a_j(\xi,\tau_j(\xi))\tilde{b}_{jk}(\xi,\tau_j(\xi).
$$
Using \reff{f22} and \reff{f313009}, let us transform the second summand in \reff{dtDC} as
\begin{eqnarray}
&\displaystyle \int_{0}^{x} e_{jk}(\xi,x,t)b_{jk}(\xi,\tau_j(\xi))
\d_t\tau_k(0,\xi,\tau_j(\xi))\d_t\tau_j(\xi)\d_{2}u_{k}(1,\tau_k(0,\xi,\tau_j(\xi))) 
d\xi&
\nonumber\\ &\displaystyle
=\int_{0}^{x}\tilde{e}_{jk}(\xi,x,t)\frac{d}{d\xi} u_{k}(1,\omega_k(0,\xi,\tau_j(\xi))) d\xi&
\nonumber\\ &\displaystyle
=\Bigl[\tilde{e}_{jk}(\xi,x,t) u_k(1,\tau_k(0,\xi,\tau_j(\xi)))\Bigr]_{\xi=0}^{\xi=x}&
\nonumber\\ &\displaystyle
-\int_{0}^{x}\d_{\xi}\tilde{e}_{jk}(\xi,x,t)u_k(1,\tau_k(0,\xi,\tau_j(\xi))) d\xi.&
\label{fc22}
\end{eqnarray}
The bound \reff{f314} now easily follows from \reff{dtDC} and \reff{fc22}. This finishes the proof of the bound \reff{f310} and, hence the statement \reff{plusone}.

It remains to prove \reff{plustwo}. Fix an arbitrary $\varepsilon>0$. We have to prove the estimates
\begin{equation}\label{d2e0}
|(D^2u)(x^{'},t^{'})-(D^{2}u)(x^{''},t^{''})|<\varepsilon
\end{equation}
and 
\begin{equation}\label{dce}
|(DCu)(x^{'},t^{'})-(DCu)(x^{''},t^{''})|<\varepsilon
\end{equation}
for all $u\in Y$ and all $x^{'}, x^{''}\in [0,1]$,  $t^{'}, t^{''}\in\R\setminus[-T,T]$ and  some $T>0$.

Let us prove \reff{d2e0}. By \reff{f311}, given $j\le n$ and $u\in Y$, we have
\begin{equation}\label{final}
\begin{array}{cc}
|(D^2u)_j(x^{'},t^{'})-(D^{2}u)_j(x^{''},t^{''})|\le|(D^2u)_j(x^{'},t^{'})|+|(D^{2}u)_j(x^{''},t^{''})| &  \\
=2\displaystyle\max\limits_{j\le n}\max\limits_{x\in[0,1]}\max\limits_{t\in\R\setminus[-T,T]}\left|\sum_{k\neq j}\sum_{l\neq k}\int_{0}^{x}\int_{\eta}^{x} d_{jkl}(\xi,\eta,x,t)b_{jk}(\xi,\tau_j(\xi))u_l(\eta,\tau_k(\eta,\xi,\tau_j(\xi))) d\xi d\eta\right| &  \\
\le L\|u\|\displaystyle\max\limits_{k\neq j,l\ne k}\max\limits_{x,\xi,\eta\in[0,1]}\max\limits_{t\in\R\setminus[-T,T]}|b_{jk}(\xi,\tau_j(\xi))b_{kl}(\eta,\tau_k(\eta,\xi,\tau_j(\xi)))|, & 
\end{array}
\end{equation}
the constant $L$ being dependent on $n$, $a_j$ and $b_{jj}$ but not on $u\in Y$ and $b_{jk}$ with $j\neq k$. Since $\|u\|$ is bounded on $Y$, the desired estimate \reff{d2e0} now straightforwardly follows from the estimate \reff{final}, the assumption \reff{f143l}, and the fact that $\tau_j(\xi,x,t)\to\infty$ as $t\to\pm\infty$.

The estimate \reff{dce} is obtained by the same argument, what finishes the proof of \reff{plustwo}. The theorem is proved.
\end{proof}

\subsubsection{Uniqueness of a bounded continuous solution}\label{sec:uniq}

By the assumption \reff{f5l},  there exists  
$m \in \{0,1,\ldots,n\}$ such that for all $(x,t) \in \R^2$
$$
a_j(x,t) > 0 \mbox{ for } j\le m \quad \mbox{ and } \quad
a_j(x,t) < 0 \mbox{ for } m<j\le n.
$$
Write
$$
 x_j:=\left\{
 \begin{array}{rl}
 0 &\mbox{if}\ 1\le j\le m,\\
 1 &\mbox{if}\ m< j\le n.
\end{array}
\right.
$$

\begin{thm}\label{thm:thuni}
Suppose that the conditions 
\reff{f5l} and \reff{f143l} are fulfilled. Moreover, assume that there is $T>0$ such that either
\begin{equation}\label{starone}
\inf_{t<-T}{\int_{1-x_j}^{x_j}\left(\frac{b_{jj}}{a_j}\right)(\eta,\tau_j(\eta,1-x_j,t))d\eta}<0 \;\textrm{ for all }
j\le n
\end{equation}
or
\begin{equation}\label{startwo}
\sup_{t>T}{\int_{1-x_j}^{x_j}\left(\frac{b_{jj}}{a_j}\right)(\eta,\tau_j(\eta,1-x_j,t))d\eta}>0 \;\textrm{ for all }
j\le n.
\end{equation}
Then a bounded continuous solution to \reff{f10a} (if any) is unique.
\end{thm}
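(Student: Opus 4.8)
The plan is as follows. By Lemma \ref{lem:31} it is enough to show that the space $\K$ of bounded continuous solutions to \reff{f10a} with $f\equiv 0$ is trivial: if $u_1$ and $u_2$ solve \reff{f10a} with the same $f$, then $u_1-u_2\in\K$. So fix $u\in\K$. We may assume \reff{starone} holds; the case \reff{startwo} is treated by the analogous argument in which characteristics are traced towards $+\infty$ rather than $-\infty$. The idea is to prove that $\|u(\cdot,t)\|\to0$ as $t\to-\infty$, in fact at an exponential rate strictly larger than the exponential bound $\om$ of the evolution family from Theorem \ref{thm:th1}, and then to conclude by well-posedness.

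First I would restrict \reff{f10a} to the boundary $x=1$. Writing $v_j(t):=u_j(1,t)$ and $\sigma_j(t):=\tau_j(0,1,t)$, this yields $v_j(t)=c_j(0,1,t)\,v_j(\sigma_j(t))+r_j(t)$ for every $j\le n$, with $c_j$ as in \reff{f8} and $r_j$ collecting the off-diagonal integral. By \reff{f7l} and \reff{f22}--\reff{f23} the map $t\mapsto\sigma_j(t)$ is an increasing $C^1$-diffeomorphism of $\R$ with $t-\sigma_j(t)$ bounded below and above by positive constants; $\sigma_j(t)<t$ for $j\le m$, while $\sigma_j(t)>t$ for $m<j\le n$, in which case one iterates the inverse shift instead, rewriting the identity as $v_j(t)=c_j(0,1,\sigma_j^{-1}(t))^{-1}\bigl(v_j(\sigma_j^{-1}(t))-r_j(\sigma_j^{-1}(t))\bigr)$. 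After a change of base point of the $j$-th characteristic, assumption \reff{starone} provides $\de\in(0,1)$ and (enlarging $T$) $T_1>0$ such that, for $t<-T_1$, the relevant multiplier — $c_j(0,1,t)$ when $j\le m$ and $c_j(0,1,\sigma_j^{-1}(t))^{-1}$ when $j>m$ — is bounded by $\de$. Moreover, since $\tau_j(\xi,1,t)\to-\infty$ as $t\to-\infty$ uniformly in $\xi\in[0,1]$, assumption \reff{f143l} yields, for any prescribed $\eps>0$ and all $t<-T_1$ (with $T_1=T_1(\eps)$ large enough), $|r_j(t)|\le C_1\eps\sup_{|s-t|\le\rho}\|u(\cdot,s)\|$, where $\rho:=(\inf_{j,x,t}|a_j(x,t)|)^{-1}$ and $C_1$ depends only on $n$ and on the bounds of $a_j$ and $b_{jj}$, not on $\eps$ nor on the off-diagonal coefficients.

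Next comes the self-improving estimate and the conclusion. Put $M_1(s):=\sup_{\tau\le s}\|u(\cdot,\tau)\|$, non-decreasing and bounded by $\|u\|_\infty$. Iterating the scalar equation and letting the number of iterations tend to infinity (the shifted arguments go to $-\infty$, the geometric series in $\de$ converges) gives $|v_j(t)|\le\frac{C_1\eps}{1-\de}M_1(t+\rho)$ for $t<-T_1$. Substituting this, together with the bound on $|r_j|$, back into \reff{f10a}, and using that $|c_j(0,\cdot,\cdot)|$ is bounded by a constant depending only on the bounds of $a_j$ and $b_{jj}$, yields $\|u(\cdot,t)\|\le C_2\eps\,M_1(t+2\rho)$ for $t$ sufficiently negative, with $C_2$ independent of $\eps$; taking the supremum over $\tau\le s$ gives $M_1(s)\le C_2\eps\,M_1(s+2\rho)$ for all $s<-T_2$. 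Now fix $\eps$ so small, via \reff{f143l} (which then fixes $T_1,T_2$), that $C_2\eps<e^{-2\rho\om}$ with $\om$ the constant in \reff{k3}; iterating the last inequality over $k\to\infty$ steps shows $M_1(s)\le C_3 e^{\la s}$ for $s\le-T_2$, where $\la:=-(2\rho)^{-1}\log(C_2\eps)>\om$. Finally, by Lemma \ref{lem:31}, $u\in\K$ means $\tilde u(t)=U(t,s)\tilde u(s)$ for all $t\ge s$ (this is \reff{mild} with $f\equiv0$), so for fixed $t$ and every $s\le\min\{t,-T_2\}$,
\beq
\|\tilde u(t)\|\le\|U(t,s)\|\,\|\tilde u(s)\|\le K e^{\om(t-s)}M_1(s)\le K C_3\,e^{\om t}\,e^{(\la-\om)s},
\ee
and the right-hand side tends to $0$ as $s\to-\infty$ since $\la>\om$; hence $\tilde u(t)=0$, i.e. $\K=\{0\}$.

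The main obstacle I anticipate is twofold. First, one must check carefully that \reff{starone} (respectively \reff{startwo}), after re-basing the characteristics, genuinely produces a \emph{uniform} contraction factor $\de<1$ for the scalar return map for all large $|t|$; in particular the inverse-shift reformulation for the indices $j>m$ with $a_j<0$ needs to be handled correctly. Second, the bootstrap must be organised so that the single small parameter $\eps$ (supplied by \reff{f143l}) can be chosen, independently of $\de$, $\rho$ and $\om$, small enough both to close the self-improving inequality $M_1(s)\le C_2\eps M_1(s+2\rho)$ and to make the resulting decay rate $\la$ exceed $\om$; this hinges on the constants $C_1,C_2$ being independent of $\eps$ and on keeping track of the cascade of thresholds $T_1<T_2$.
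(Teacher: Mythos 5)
Your proof is correct, but follows a genuinely different route than the paper's, even though both exploit the same two sources of smallness: \reff{starone} (read, as you do, as a supremum condition, which is what is needed to extract a uniform contraction factor $\delta<1$ for the loop multiplier $c_j(x_j,1-x_j,t)$ over all $t<-T$; the paper's displayed bound on $\|\hat{C}\|$ also tacitly requires this uniformity), and \reff{f143l}, which makes the off-diagonal coupling arbitrarily small for $|t|$ large. The paper works with the re-based integral representation \reff{f10axj} on the half-strip $Q^{-T}$, asserts that $I-\hat{C}$ is invertible there with $\|(I-\hat{C})^{-1}\hat{D}\|<1$, concludes uniqueness on $Q^{-T}$ by the Banach fixed point theorem, and then propagates uniqueness forward by the well-posedness of the initial-value problem in $Q_{-T}$. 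You instead iterate the scalar return map along $x=1$, establish $|v_j(t)|\le C_1\eps(1-\delta)^{-1}M_1(t+\rho)$, bootstrap through \reff{f10a} to the functional inequality $M_1(s)\le C_2\eps\, M_1(s+2\rho)$, and conclude that any bounded homogeneous solution decays as $t\to-\infty$ at a rate $\lambda$ exceeding the constant $\omega$ from \reff{k3}, forcing $u\equiv0$. Your version is more elementary and makes the contraction mechanism explicit --- and in fact is more careful than the paper's literal operator-norm claim $\|\hat{C}\|<1$, which cannot hold as written since $\hat{C}$ acts as the identity on traces at $x=x_j$; what contracts is the loop (return) map, exactly the object you iterate --- while also yielding an explicit decay estimate for homogeneous bounded solutions in the past as a byproduct. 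The paper's operator-theoretic framing fits more naturally alongside the Fredholm alternative of Theorem~\ref{thm:th12} and with solvability for a general inhomogeneity $f$.
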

\begin{proof}
Given  $T\in\R$, let $Q^{T}=\{(x,t)\in\R^2\,:\,0\le x\le 1,-\infty<t\le T\}$ and $Q_{T}
=\{(x,t)\in\R^2\,:\,0\le x\le 1,T\le t<\infty\}$. 

First assume that the condition \reff{starone} is fulfilled.
By technical reasons, we will use an integral representation of the problem \reff{f1l}--\reff{f2l}, which differs from  \reff{f10a}, namely
\begin{eqnarray}
\lefteqn{
u_j(x,t)=c_j(x_j,x,t)u_j(1-x_j,\tau_j(x_j,x,t))}\nonumber\\ [2mm] &&
-\int_{x_j}^{x}d_j(\xi,x,t)\left(\sum_{k\neq j} b_{jk}(\xi,\tau_j(\xi,x,t))u_k(\xi,\tau_j(\xi,x,t))-f_j(\xi,\tau_j(\xi,x,t))\right) d\xi.
\label{f10axj}
\end{eqnarray} 
It is obtained by integration along the characteristic curves in $x$, in the direction of time decreasing. Note that the integral equations \reff{f10a} and \reff{f10axj} are equivalent in the sense that every continuous solution to \reff{f10a} is a continuous solution to \reff{f10axj} and vice versa. The proof 
of this fact
is similar to the proof of Lemma \ref{lem:31} (Claim 1). The equation \reff{f10axj} is more suitable for our purposes, since  the right-hand side of \reff{f10axj} maps $BC_{per}(Q^{-T};\R^n)$ into itself for each $T>0$,
what is not the case for \reff{f10a}. Fix an arbitrary $T>0$ and consider the equation \reff{f10axj} in $Q^{-T}$. The latter can be written in the operator form 
\begin{equation}\label{operrep}
 u=\hat{C}u+\hat{D}u+\hat{F}f
\end{equation}
 with the operators $\hat{C},\hat{D},\hat{F}:BC_{per}(Q^{-T};\R^n)\to BC_{per}(Q^{-T};\R^n)$ given by 
\beq\label{hatC}
(\hat{C}v)_j(x,t)=c_j(x_j,x,t)v_j(1-x_j,\tau_j(x_j)), \;\;\; j\le n,
\ee
$$
 (\hat{D}v)_j(x,t)=
 -\int_{x_j}^{x}d_j(\xi,x,t)\sum_{k\neq j} b_{jk}(\xi,\tau_j(\xi))v_k(\xi,\tau_j(\xi)) d\xi, \;\;\; j\le n,
$$
and
$$
(\hat{F}f)_j(x,t)=
\int_{x_j}^{x}d_j(\xi,x,t)f_j(\xi,\tau_j(\xi)) d\xi, \;\;\; j\le n,
$$
respectively. Taking into account the definition of $\hat{C}$, the notation \reff{f8}, 
and the assumption \reff{starone}, we get
$$
\|\hat{C}\|_{\mathcal{L}(BC_{per}(Q^{-T};\R^n))}\le\exp\left\{\max_{j\le n}\inf_{t<-T}\int_{1-x_j}^{x_j}\left(\frac{b_{jj}}{a_j}\right)(\eta,\tau_j(\eta,1-x_j,t))d\eta\right\}<1.
$$
It follows that the operator $I-\hat{C}: BC_{per}(Q^{-T};\R^n)\to BC_{per}(Q^{-T};\R^n)$ is bijective 
and, hence, the operator equation \reff{operrep} reads
$$
u=(I-\hat{C})^{-1}\hat{D}u+(I-\hat{C})^{-1}\hat{F}f.
$$
Using the assumption \reff{f143l}, fix $T>0$ so large that the norm of the operator $\hat{D}$ is 
so small that
\begin{equation}\label{3starr}
\|(I-\hat{C})^{-1}\hat{D}\|_{\mathcal{L}(BC_{per}(Q^{-T};\R^n))}<1.
\end{equation}
By the Banach fixed-point theorem, there exists a unique function $u\in BC_{per}(Q^{-T};\R^n)$ satisfying \reff{f10axj} in $Q^{-T}$.

 Now, let us  consider \reff{f1l}--\reff{f2l} in the domain $Q_{-T}$ and
 subject it to the initial condition
\begin{equation}\label{453for}
u_{j}|_{t=-T}=u_j^{-T}(x), \; j\le n.
\end{equation}
Here $u^{-T}(x)$ is the continuous solution to the problem  \reff{f1l}--\reff{f2l}
(or, the same, to  \reff{f10axj}) in $Q^{-T}$ at $t=-T$.
Due to the method of characteristics, the unknown $u$ in $Q_{-T}$ is given by 
the formula  \reff{f10axj} if $\tau_j(x_j,x,t)>-T$ and by the formula 
\begin{equation}\label{+}
u_{j}(x,t)=u_j^{-T}(\xi_j(-T,x,t)), \; j\le n,
\end{equation}
otherwise.
The existence and uniqueness of a continuous solution $u$  to the problem  \reff{f1l}, \reff{f2l},
\reff{453for}
(or, the same, to  \reff{f10axj},
\reff{+})  in $Q_{-T}$ is proved in \cite[Theorem 2.1]{Km}. 

We conclude that the system  \reff{f10a} in the strip $[0,1]\times\R$ has a unique 
continuous solution bounded at $-\infty$. 
This entails that a continuous solution to the system \reff{f10a} (if any) is unique.
The proof under the condition \reff{starone} is complete.

To prove the theorem under the condition \reff{startwo},
we again switch to a suitable integral representation which is equivalent to \reff{f10a}.
The operator form of this integral representation in the domain $Q_{T}$ for an arbitrary 
fixed $T>0$ reads
\beq\label{k6}
u=\tilde{C}u+\tilde{D}u+\tilde{F}f,
\ee
where the  operators $\tilde{C}, \tilde{D}, \tilde{F}: BC_{per}(Q_T;\R^n)\to BC_{per}(Q_T;\R^n)$ 
are defined by 
$$
(\tilde{C}v)_j(x,t)=c_j(1-x_j,x,t)v_j(x_j,\tau_j(1-x_j)), \;\;\; j\le n,
$$
$$
 (\tilde{D}v)_j(x,t)=
 -\int_{1-x_j}^{x}d_j(\xi,x,t)\sum_{k\neq j} b_{jk}(\xi,\tau_j(\xi))v_k(\xi,\tau_j(\xi)) d\xi, \;\;\; j\le n,
$$
and
$$
(\tilde{F}f)_j(x,t)=
\int_{1-x_j}^{x}d_j(\xi,x,t)f_j(\xi,\tau_j(\xi)) d\xi, \;\;\; j\le n.
$$
Similarly to the above,  the operator 
$I-\tilde{C}:BC_{per}(Q_T;\R^n)\to BC_{per}(Q_T;\R^n)$ is bijective by the assumption 
\reff{startwo}, and we have
\begin{equation}\label{starfour}
u=(I-\tilde{C})^{-1}\tilde{D}u+(I-\tilde{C})^{-1}\tilde{F}f.
\end{equation}
Due to \reff{f143l}, for sufficiently large $T$ it holds
$$
\|(I-\tilde{C})^{-1}\tilde{D}|_{BC_{per}(Q_{T};\R^n)}\|<1.
$$
This means that the equation \reff{starfour} has a unique continuous solution in $Q_{T}$. Further we consider  \reff{f1l}--\reff{f2l} 
in the domain $Q^{T}$ with the reverse time. The initial condition is posed at $t=T$, namely
$$
u_{j}|_{t=T}=u_{j}^T(x), \; j\le n,
$$
where $u^{T}(x)$ is the continuous solution to the problem  \reff{f1l}--\reff{f2l}
 in $Q_{T}$ at $t=T$.
By \cite[Theorem 2.1]{Km}, this problem  has a unique continuous solution, what completes the proof.
\end{proof}

It follows that, under the conditions of Theorems \ref{thm:th12} and \ref{thm:thuni},
the system \reff{f10a}  has a unique  solution $u\in BC(\R;C_{per}(R;R^n))$.
Moreover, the conditions \reff{k0} and \reff{k00} entail both 
the condition \reff{betacond} and one of the conditions \reff{starone} and
\reff{startwo}. Hence, under the assumptions of Theorem \ref{thm:th2},
all assumptions of Theorems \ref{thm:th12} and \ref{thm:thuni} are satisfied.
Theorem~\ref{thm:th2} is therewith proved.

\section{Discussion and open problems}\label{sec:openproblems}
\setcounter{equation}{0}

\subsection{Other boundary conditions}
Despite the periodicity conditions \reff{f2l} were essentially used in the proof of the main 
results, we believe that they are not necessary for the statements of Theorems \ref{thm:th3} 
and \ref{thm:th2}.
It would be interesting to extend our approach to other types of boundary conditions. For instance,  the boundary conditions of the reflection type, say,
$$
\begin{array}{l}
\displaystyle
u_j(t,0) = \sum\limits_{k=1}^n\left(r_{jk}^{00}(t)u_k(t,0)+r_{jk}^{01}(t)u_k(t,1)\right)
, \quad  1\le j\le m,\\
\displaystyle
u_j(t,1) = \sum\limits_{k=1}^n\left(r_{jk}^{10}(t)u_k(t,0)+r_{jk}^{11}(t)u_k(t,1)\right)
, \quad  m< j\le n,
\end{array}
$$
are of a particular interest due to numerous applications in  semiconductor laser modeling 
 \cite{LiRadRe,RadWu,Sieber}, boundary feedback control theory \cite{bastin,coron,Pavel},
chemotaxis problems \cite{HRL}.

\subsection{Second-order hyperbolic equations}
For the second-order hyperbolic equation without the zero-order term
\beq\label{21}
\partial^2_tu  - a^2(x,t)\partial^2_xu + a_1(x,t)\d_tu + a_2(x,t)\d_xu =0, \quad 
(x,t)\in\R^2,
\ee
with the periodic boundary condition
\beq\label{22}
u(x,t)=u(x+1,t)
\ee
Theorems \ref{thm:th3} and \ref{thm:th2} provide sufficient conditions for the existence of an
exponential trichotomy (see \cite{ElHa}  for the definition). Indeed, in the new unknowns 
\beq\label{u_sol}
u_1=\partial_tu + a(x,t)\partial_xu,\quad u_2=\partial_tu  - a(x,t)\partial_xu
\ee
the problem \reff{21}--\reff{22} reads as follows:
\beq\label{1s}
\begin{array}{ll}
\displaystyle
\partial_tu_1  - a(x,t)\partial_xu_1 + b_{11}(x,t)u_1+ b_{12}(x,t)u_2 =0  \\\displaystyle
\partial_tu_2  + a(x,t)\partial_xu_2 + b_{21}(x,t)u_1+ b_{22}(x,t)u_2 = 0, 
\end{array}
\ee
\beq\label{2s}
u_j(x+1,t) = u_j(x,t),\quad   j=1,2,
\ee
where
$$
\begin{array}{cc}
\displaystyle
b_{11}=\frac{a_1}{2}+\frac{a_2}{2a}+\frac{a\d_xa-\d_ta}{2a},\quad 
b_{12}=\frac{a_1}{2}-\frac{a_2}{2a}+\frac{a\d_xa-\d_ta}{2a},\\\displaystyle
b_{21}=\frac{a_1}{2}+\frac{a_2}{2a}+\frac{a\d_xa+\d_ta}{2a},\quad
b_{22}=\frac{a_1}{2}-\frac{a_2}{2a}-\frac{a\d_xa+\d_ta}{2a}.
\end{array}
$$
One can easily see that the problems \reff{21}--\reff{22} and \reff{1s}--\reff{2s}
are equivalent in the following sense: For any $c\in\R$ the function $u+c$ is a solution to 
\reff{21}--\reff{22} iff the function $(u_1,u_2)$ given by \reff{u_sol} is a solution to 
 \reff{1s}--\reff{2s}. This means that, under the conditions ensuring the existence of
the  exponential dichotomy for \reff{1s}--\reff{2s}, the problem \reff{21}--\reff{22}
has an exponential trichotomy. For instance, the assumptions \reff{a--}-\reff{a+-}
of Theorem \ref{thm:th3} in the case of constant  coefficients read
$$
\begin{array}{ll}
|aa_1-a_2|<aa_1+a_2& \mbox{ if } \quad aa_1+a_2>0,\\
|aa_1+a_2|<a_2-aa_1& \mbox{ if } \quad a_2-aa_1>0,\\\displaystyle
|aa_1-a_2|<2(-aa_1-a_2)\left(\exp^{\frac{-aa_1-a_2}{2a^2}}-1\right)&  \mbox{ if } \quad aa_1+a_2<0,\\
\displaystyle |aa_1+a_2|<2(aa_1-a_2)\left(\exp^{\frac{aa_1-a_2}{2a^2}}-1\right)&  \mbox{ if } \quad  aa_1-a_2>0.
\end{array}
$$

For the general second-order equation (with the zero-order term)
$$
\partial^2_tu  - a^2(x,t)\partial^2_xu + a_1(x,t)\d_tu + a_2(x,t)\d_xu + a_3(x,t)u=0, \quad 
(x,t)\in\R^2
$$
 the first-order system reads
$$
\begin{array}{ll}
\displaystyle
\partial_tu_1  - a(x,t)\partial_xu_1 + b_{11}(x,t)u_1+ b_{12}(x,t)u_2 + a_3(x,t)u=0  
\\\displaystyle
\partial_tu_2  + a(x,t)\partial_xu_2 + b_{21}(x,t)u_1+ b_{22}(x,t)u_2 + a_3(x,t)u= 0
\\\displaystyle
\partial_tu + a(x,t)\partial_xu=u_1.
\end{array}
$$
Note that the last system  fulfills neither the assumptions of  
Theorem \ref{thm:th3} (as $b_{33}=0$, contradicting to \reff{betacond}) 
nor of Theorem \ref{thm:th2}
(as $a_1=a_3$ and $b_{13}\ne 0$, contradicting to \reff{f9l}). 

This shows that, in general,   second-order hyperbolic equations require
different techniques.

\subsection{Robustness of exponential dichotomy}

The question of robustness of an exponential dichotomy (stability property with respect to data perturbations) for hyperbolic PDEs seems to be a challenging open problem.

Nevertheless, 
our Theorems \ref{thm:th3} and  \ref{thm:th2} give the following consequences.

\begin{cor}\label{thm:rob1}
Under the assumptions of Theorem \ref{thm:th3}, the exponential dichotomy is robust with respect to small perturbations of $a_j$ and $b_{jk}$. Specifically, 
 there exists $\eps>0$ such that the exponential dichotomy persists for all continuously differentiable functions 
$\tilde a_j$ and $\tilde b_{jk}$ that are $1$-periodic in $x$, satisfy the inequalities 
\beq\label{f9lko}
\sup_{j,x,t}|a_j-\tilde a_j|<\eps \; \mbox{ and } \; \sup_{j,k,x,t}|b_{jk}-\tilde b_{jk}|<\eps,
\ee
and fulfill the conditions \reff{f5l}, \reff{betacond}--\reff{a+-}
with $\tilde a_j$ and $\tilde b_{jk}$ in place of $a_j$ and $b_{jk}$, respectively.
\end{cor}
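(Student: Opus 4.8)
The plan rests on the observation that Theorem \ref{thm:th3} already produces an exponential dichotomy on $\R$ for \emph{any} coefficient tuple meeting its hypotheses; so it suffices to show that those hypotheses form an \emph{open} set, i.e. that for $\eps>0$ small enough every $\tilde a_j,\tilde b_{jk}$ obeying \reff{f9lko} together with the standing regularity assumptions of Section~\ref{sec:setting} automatically satisfies \reff{f5l}, \reff{betacond} and \reff{a--}--\reff{a+-}. Applying Theorem \ref{thm:th3} to the perturbed problem then gives the claim. (Should one want the dichotomy constants $M,\om$ to be uniform over the whole perturbation family, one would instead run the argument through the operators $\tilde C,\tilde D$ of Section~\ref{sec:proofth3a}, using that the estimate below yields $\sup\|(I-\tilde C)^{-1}\tilde D\|<1$ with $\|\tilde C\|,\|\tilde D\|$ uniformly bounded, and read off uniform constants from the Neumann series for $(I-\tilde C-\tilde D)^{-1}$; the corollary as stated only asks for persistence.)

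First I would handle \reff{f5l} and \reff{betacond}. Set $\de:=\min\{\inf_{j,x,t}|a_j|,\,\inf_{j,x,t}|b_{jj}|\}$, which is positive by \reff{f5l} and \reff{betacond}. If $\eps<\de$, then \reff{f9lko} gives $|\tilde a_j|\ge\de-\eps>0$ and $|\tilde b_{jj}|\ge\de-\eps>0$ uniformly on $\R^2$, so \reff{f5l} and \reff{betacond} hold for $\tilde a_j,\tilde b_{jk}$. In particular each $a_j$ (resp. $b_{jj}$) has a constant sign on $\R^2$, and for $\eps<\de$ so does $\tilde a_j$ (resp. $\tilde b_{jj}$), with the \emph{same} sign; hence the case distinctions ``$\al_j^->0$ or $\al_j^+<0$'' and ``$\be_j^->0$ or $\be_j^+<0$'' select, for each $j$, the same one of the four branches \reff{a--}--\reff{a+-} for $\tilde a_j,\tilde b_{jk}$ as for $a_j,b_{jk}$.

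Next I would handle \reff{a--}--\reff{a+-}. Let $\tilde\al_j^\pm,\tilde\be_j^\pm,\tilde\be_j$ be the quantities defined as $\al_j^\pm,\be_j^\pm,\be_j$ but from $\tilde a_j,\tilde b_{jj},\sum_{k\ne j}|\tilde b_{jk}|$; then \reff{f9lko} yields $|\tilde\al_j^\pm-\al_j^\pm|\le\eps$, $|\tilde\be_j^\pm-\be_j^\pm|\le\eps$ and $|\tilde\be_j-\be_j|\le(n-1)\eps$. In the branch selected above, both sides of the relevant inequality are explicit functions of $(\al_j^\pm,\be_j^\pm,\be_j)$ which are continuous on a full neighbourhood of the base data: the only potential singularities are the denominators appearing in \reff{a++}--\reff{a+-}, and, under the sign constraints of that branch together with the uniform lower bound $\de-\eps$ from the previous paragraph, each factor $\al_j^\pm$ and each factor of the form $1-e^{\cdot}$ there stays bounded away from $0$ on that neighbourhood. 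Since \reff{a--}--\reff{a+-} hold \emph{strictly} at $(\al_j^\pm,\be_j^\pm,\be_j)$, continuity provides $\eps_0\in(0,\de)$ such that $\eps<\eps_0$ forces \reff{a--}--\reff{a+-} at $(\tilde\al_j^\pm,\tilde\be_j^\pm,\tilde\be_j)$. Theorem \ref{thm:th3} applied with $\tilde a_j,\tilde b_{jk}$ then yields the exponential dichotomy, as desired.

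The main obstacle, a modest one, is the verification in the last paragraph that the four right-hand sides in \reff{a--}--\reff{a+-} do extend to continuous, finite functions on a neighbourhood of the base data, i.e. that no denominator degenerates there; this is precisely what the strict positivity of $\inf|a_j|$ and $\inf|b_{jj}|$ in \reff{f5l}, \reff{betacond} guarantees once $\eps$ is small. Beyond this bookkeeping there is no analytic content: the strictness of \reff{a--}--\reff{a+-} does all the work, which is exactly why a quantitative sufficient condition of the type of Theorem \ref{thm:th3} automatically carries such a robustness statement.
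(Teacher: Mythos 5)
The paper gives no proof for this corollary, so there is nothing to compare against; your openness argument is the natural one and it is correct. Two remarks. First, as literally worded the corollary already lists ``fulfill the conditions \reff{f5l}, \reff{betacond}--\reff{a+-}'' among the hypotheses on $\tilde a_j,\tilde b_{jk}$, in which case the conclusion is Theorem \ref{thm:th3} applied verbatim to the perturbed coefficients and the smallness parameter $\eps$ is vacuous; what you actually establish is the stronger, evidently intended statement that for $\eps$ small enough those conditions are \emph{automatic} for any $C^1$, $1$-periodic, $\eps$-close coefficients. Your two-step argument is the right one: $\eps<\delta:=\min\{\inf_{j,x,t}|a_j|,\inf_{j,x,t}|b_{jj}|\}$ preserves \reff{f5l}, \reff{betacond}, and forces $\tilde a_j$, $\tilde b_{jj}$ to have the same signs as $a_j$, $b_{jj}$, so the same branch among \reff{a--}--\reff{a+-} is selected; then $|\tilde\alpha_j^\pm-\alpha_j^\pm|\le\eps$, $|\tilde\beta_j^\pm-\beta_j^\pm|\le\eps$, $|\tilde\beta_j-\beta_j|\le(n-1)\eps$ together with the strictness of \reff{a--}--\reff{a+-} and the continuity of their right-hand sides on the open region that $\eps<\delta$ keeps you inside finish the job. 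Second, a small gap attributable to the corollary's phrasing rather than to your proof: the standing assumptions of Section \ref{sec:setting} require $\partial_x a_j$ and $\partial_t a_j$ to be \emph{bounded}, and ``continuously differentiable'' together with \reff{f9lko} does not enforce boundedness of the derivatives of $\tilde a_j$; strictly speaking this should be added as a hypothesis on the perturbation so that Theorem \ref{thm:th1} (and hence Theorem \ref{thm:th3}) applies to the perturbed system.
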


\begin{cor}\label{thm:rob2}
Suppose that the conditions \reff{f5l}, \reff{f143l},  and one of the conditions 
\reff{k0} and \reff{k00} are fulfilled.

1. If
\beq\label{f9ll}
a_j\neq a_k \mbox{ for all } 1\le j\neq k\le n \mbox{ and } (x,t)\in\R^2,
\ee 
then the exponential dichotomy is robust under small perturbations of $a_j$ and $b_{jk}$.
Specifically, there exists $\varepsilon>0$ such that the exponential dichotomy persists for all continuously differentiable functions $\tilde{a}_j$ and $b_{jk}$ that are 1-periodic in $x$, satisfy the inequalities \reff{f9lko}, and fulfill the conditions \reff{f5l}, \reff{k0}, \reff{f9ll} with $\tilde{a}_j$ and $\tilde{b}_{jk}$ in place of $a_j$ and $b_{jk}$, respectively.

2. If
$$
\begin{array}{ll}
b_{jk}\equiv 0 
\mbox{ for all } 1 \le j \not= k \le n \\
\mbox{in a neighborhood of the set }
\{(x,t)\in\overline\Omega\,:\, a_k(x,t)=a_j(x,t)\},
\end{array}
$$
then the exponential dichotomy   is robust under small perturbations of $a_j$.
Specifically,  there exists $\varepsilon>0$ such that the exponential dichotomy persists for all continuously differentiable functions $\tilde{a}_j$ that are 1-periodic in $x$, satisfy the first inequality in \reff{f9lko}, and fulfill the conditions \reff{f5l}, 
\reff{k0}, \reff{f143l}
with $\tilde a_j$ in place of $a_j$.
\end{cor}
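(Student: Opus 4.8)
The plan is to obtain both statements from Theorem~\ref{thm:th2} itself, applied now to the \emph{perturbed} initial-boundary value problem, i.e.\ to \reff{f1l}--\reff{f3l} with $\tilde a_j,\tilde b_{jk}$ in place of $a_j,b_{jk}$; here ``the dichotomy persists'' is read as: the perturbed evolution family $\tilde U(t,s)$ also has an exponential dichotomy on~$\R$. Since the $\tilde a_j$ are continuously differentiable, $1$-periodic in~$x$, and close to $a_j$ in the sup norm (hence bounded together with their first derivatives), the standing assumptions are met, so by Theorem~\ref{thm:th1} the perturbed problem generates a strongly continuous, exponentially bounded evolution family $\tilde U(t,s)$ on $C_{per}(\R;\R^n)$; and by the argument of Section~\ref{sec:proofth2}, $\tilde U(t,s)$ has an exponential dichotomy on $\R$ as soon as the perturbed coefficients satisfy the four hypotheses of Theorem~\ref{thm:th2}, namely \reff{f5l}, one of \reff{k0} and \reff{k00}, \reff{f143l}, and \reff{f9l}. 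Of these, \reff{f5l} survives for $\eps$ small because it is a strict-infimum condition, while \reff{k0}/\reff{k00} and \reff{f143l} are either imposed on the perturbed coefficients in the corollary or, in part~2, untouched because there only the $a_j$ are perturbed. Everything therefore reduces to verifying that the factorization \reff{f9l} persists, and this is precisely what the choice of $\eps$ has to secure.

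In part~1 this is immediate: the assumed condition \reff{f9ll} for the $\tilde a_j$ says $\tilde a_j\ne\tilde a_k$ for all $j\ne k$ and all $(x,t)$, so the quotient $\tilde b_{jk}/(\tilde a_j-\tilde a_k)$ is a well-defined $C^1$ function, $1$-periodic in~$x$, and it is the function required in \reff{f9l}. (If one wants the factor in \reff{f9l} to be bounded, as in the proof of Theorem~\ref{thm:th2}, one also uses $\inf|\tilde a_j-\tilde a_k|\ge\inf|a_j-a_k|-2\eps>0$ for $\eps$ small.) Hence all hypotheses of Theorem~\ref{thm:th2} hold for the perturbed problem and $\tilde U(t,s)$ has an exponential dichotomy.

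In part~2 only the $a_j$ are perturbed, so $b_{jk}$, $b_{jj}$, and with them \reff{f143l} and \reff{k0}/\reff{k00}, are literally unchanged. Let $N$ be the open neighbourhood of the coincidence set $\{(x,t)\in\R^2:a_j(x,t)=a_k(x,t)\}$ on which $b_{jk}\equiv 0$; taking $N$ of tube type, $N=\{(x,t):|a_j(x,t)-a_k(x,t)|<\delta_0\}$ for some $\delta_0>0$, we see that for $\eps<\delta_0/2$ the estimate $|\tilde a_j-\tilde a_k-(a_j-a_k)|<2\eps$ forces the perturbed coincidence set $\{(x,t):\tilde a_j(x,t)=\tilde a_k(x,t)\}$ to be contained in $N$. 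Consequently the function equal to $b_{jk}/(\tilde a_j-\tilde a_k)$ on $\{\tilde a_j\ne\tilde a_k\}$ and equal to $0$ on $N$ is unambiguously defined (the two prescriptions agree on the overlap, where $b_{jk}\equiv 0$), is $C^1$ on all of $\R^2$, is $1$-periodic in~$x$, and serves as the factor in \reff{f9l} for the perturbed coefficients. Theorem~\ref{thm:th2} now applies and yields the exponential dichotomy of $\tilde U(t,s)$.

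The technical heart is the gluing in part~2: one must ensure that a $C^0$-small perturbation of the $a_j$ cannot move the coincidence set outside the region where the coupling vanishes, which is why the neighbourhood in the hypothesis has to be understood as a genuine tube $\{|a_j-a_k|<\delta_0\}$ rather than an arbitrary open set --- on a domain unbounded in~$t$ this is a real restriction. An alternative and in some ways sharper approach, which in addition would give a dichotomy with \emph{uniform} projection rank and constants on a whole sup-norm neighbourhood, is to prove directly that the operators $C$ and $D$ of Section~\ref{sec:proofth3a} depend continuously, in $\LL(BC_{per}(\R^2;\R^n))$, on $(a_j,b_{jk})$ --- with respect to the $C^1$-norm for the $a_j$ and the sup norm for the $b_{jk}$ --- via Gronwall estimates for the characteristics $\tau_j$ over the bounded interval $[0,1]$, and then to invoke the openness of the invertible operators together with the already proved invertibility of $I-C-D$; the only delicate point there is the modulus-of-continuity control for $b_{jj}$ along the perturbed characteristics.
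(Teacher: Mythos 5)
Your proof is correct and is the argument the paper intends: apply Theorem~\ref{thm:th2} to the perturbed problem and verify that its hypotheses — in particular the factorization~\reff{f9l} — survive the perturbation. The paper states Corollaries~\ref{thm:rob1} and~\ref{thm:rob2} in Section~\ref{sec:openproblems} as consequences of the main theorems without writing out the verification, and what you supply matches that intent.

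You do add one substantive clarification, and it is worth flagging. In Part~2, for a single $\eps>0$ to work uniformly in $t$, the neighbourhood of the coincidence set $\{a_j=a_k\}$ on which $b_{jk}\equiv 0$ must be read as a value-tube $\{|a_j-a_k|<\delta_0\}$ (or at least contain such a tube), not an arbitrary open set: otherwise, on a $t$-unbounded strip, the neighbourhood may pinch as $t\to\pm\infty$ and a $C^0$-small perturbation of $a_j$ could push the perturbed coincidence set $\{\tilde a_j=\tilde a_k\}$ outside the region where $b_{jk}$ vanishes, breaking the gluing that produces the $C^1$ factor in~\reff{f9l}. This is a real (if mild) sharpening of the hypothesis as written in the paper. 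A parallel remark applies to Part~1: \reff{f9ll} as stated only asserts $\tilde a_j\ne\tilde a_k$ pointwise, whereas the proof of Theorem~\ref{thm:th12} uses the factor in~\reff{f9l} as an element of $BC_{per}(\R^2;\R)$, so one really wants $\inf_{x,t}|\tilde a_j-\tilde a_k|>0$; you note this parenthetically, and it is again a looseness inherited from the paper's phrasing rather than a flaw in your argument. Your closing remark about establishing continuity of $C$ and $D$ in $\LL(BC_{per}(\R^2;\R^n))$ with respect to the data and using openness of the set of invertible operators is a legitimate alternative route and would additionally yield uniformity of the dichotomy constants and projection rank over a whole neighbourhood of the data, which the statement-by-statement verification does not directly give.
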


Henry \cite[Theorem 7.6.10]{Henry} established a general  sufficient condition of the robustness of an exponential dichotomy for abstract evolution equations. 
Attempts to apply this approach to hyperbolic PDEs  meet 
complications caused by loss of smoothness. In \cite{TKM} these complications are overcome
in the case of  boundary conditions of 
the so-called smoothing type when the solutions of initial-boundary value problems 
become more regular than  the initial data after some time. In the general case, the  robustness issue for hyperbolic PDEs remains unexplored.

\section*{Acknowledgments}

Roman Klyuchnyk  was supported by the BMU-MID Erasmus Mundus Action 2 grant
 MID2012B1422. He expresses his gratitude to the Applied Analysis group at the Humboldt
 University of Berlin for its kind hospitality.

\end{document}